\pgfplotsset{compat=1.15}
\newcommand{\rge}{\rangle}
\newcommand{\lge}{\langle}
\newcommand{\mc}{\mathcal}
\newcommand{\supp}{\operatorname{supp}}
\newcommand{\GL}{\operatorname{GL}}
\renewcommand{\le}{\leqslant}
\renewcommand{\ge}{\geqslant}
\newcommand{\fullstopbelow}{\makebox[0pt][l]{\,.}} % Adds a period below
\newcommand{\commabelow}{\makebox[0pt][l]{\,,}}    % Adds a comma below
\newcommand{\C}{\mathbb{C}}
\newcommand{\Z}{\mathbb{Z}}
\newcommand{\F}{\mathbb{F}}
\newcommand{\g}{\mathfrak{g}}
\newcommand{\git}{/\!\!/}
\numberwithin{equation}{section}
\theoremstyle{plain}
\newtheorem{theorem}{Theorem}[section]
\newtheorem{definition}[theorem]{Definition}
\newtheorem{proposition}[theorem]{Proposition}
\newtheorem{corollary}[theorem]{Corollary}
\newtheorem{lemma}[theorem]{Lemma}
\newtheorem{remark}[theorem]{Remark}
\newtheorem{example}[theorem]{Example}
\newtheorem{conjecture}[theorem]{Conjecture}
\title[Stabilization of Kac polynomials along root strings]{Stabilization of Kac polynomials along root strings}
\author[Vladyslav Zveryk]{Vladyslav Zveryk}
\address{Yale University}
\email{zverik.vladislav@gmail.com}
\begin{document}

\begin{abstract}
We study the stabilization behavior of cohomology groups associated with moduli spaces of quiver representations for a fixed quiver $Q$. Under mild conditions on a dimension vector $\delta$, we show that the dimensions of these cohomology groups stabilize when a sufficiently large multiple of $\delta$ is added. We derive explicit formulas for the stabilized dimensions and, in particular, obtain stabilization of the coefficients of Kac polynomials.
\end{abstract}

\maketitle

\section{Introduction}
Kac polynomials, introduced by Kac in \cite{Kac}, encode the enumeration of absolutely indecomposable representations of quivers over finite fields and establish connections between quiver representations and the root system of the associated Kac-Moody Lie algebras. The degree and coefficients of these polynomials provide information about the representation theory of quivers, particularly in relation to the root multiplicities of the Lie algebra.

In this paper, we study the stabilization behavior of Kac polynomials when a sufficiently large multiple of a fixed dimension vector $\delta$ is added. We consider quivers $Q$ without edge loops and dimension vectors $d$ and $\delta$ under conditions ensuring that $d+n\delta$ is an imaginary root of $Q$ for large $n$. 
%Stabilization in this context refers to the phenomenon where the cohomology groups associated with quiver moduli stabilize as the dimension vector increases, leading to stabilization of the coefficients of Kac polynomials. 
With a certain assumption on $\delta$ (see (\ref{eq:condition})), our main result is the following (Theorem \ref{t:kacstab}):
\begin{theorem}
    Write the Kac polynomial corresponding to a dimension vector $d+n\delta$ as
    $$
    A_{d+n\delta}(q)=\sum_{i=0}^{\deg A_{d+n\delta}}a_i^{(d+n\delta)}q^{\deg A_{d+n\delta}-i}.
    $$
    Then for every $i$ the coefficients $a_i^{(d+n\delta)}$ stabilize for sufficiently large $n$. Moreover, the generating function of the stabilized coefficients is given by
    \[
    \frac{(1-q) p^{|\operatorname{supp} \delta|}(q)}{\prod_{i \notin \operatorname{supp} \delta} \prod_{k=1}^{d_i} (1 - q^k)},
    \]
    where $p^l(q)$ denotes the generating function of the partition number into $l$ colors.
\end{theorem}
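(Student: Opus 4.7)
The plan is to deduce this theorem from the paper's main cohomological stabilization result (the one advertised in the abstract, which must be established before this corollary): once dimensions of cohomology groups attached to moduli of representations of dimension $d+n\delta$ are known to stabilize and their stabilized values are computed, the Kac polynomial statement follows by invoking the cohomological interpretation of Kac polynomials. Concretely, by the Hausel--Letellier--Rodriguez-Villegas theorem (or Davison's BPS reformulation), the coefficients of $A_d(q)$ compute graded dimensions of a canonical cohomology theory on the moduli of representations. Under this identification the top-indexed coefficient $a_i^{(d+n\delta)}$ corresponds to a cohomological degree independent of $n$, so the stabilization of Kac polynomial coefficients reduces to the paper's cohomology stabilization theorem applied to the family $d+n\delta$.

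Next I would compute the generating function by reading off the stabilized cohomology as a product of local contributions, one per vertex of $Q$. For each vertex $i \in \operatorname{supp}\delta$ the dimension at $i$ grows linearly with $n$, so that component stabilizes to the Poincar\'e series $\prod_{k \ge 1}(1-q^k)^{-1}$ of $B\operatorname{GL}_\infty$; summing the $|\operatorname{supp}\delta|$ such factors together yields the plethystic factor $p^{|\operatorname{supp}\delta|}(q)$. For each vertex $i \notin \operatorname{supp}\delta$ the dimension stays equal to $d_i$, contributing the finite Poincar\'e series $\prod_{k=1}^{d_i}(1-q^k)^{-1}$ of $B\operatorname{GL}_{d_i}$. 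The prefactor $(1-q)$ accounts for the normalization passing between the Kac polynomial and its cohomological counterpart---the factor $(q-1)$ appearing in Hua's formula, equivalently the removal of the one-dimensional $\mathbb{G}_m$-scaling contribution. Multiplying these pieces gives exactly the claimed formula.

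The main obstacle I foresee is the reindexing of degrees: since $\deg A_{d+n\delta}$ grows linearly in $n$ (via the Euler form $\langle d+n\delta, d+n\delta\rangle_Q$), stabilization of coefficients read from the \emph{top} demands that the cohomology stabilization occur in absolute cohomological degree, with the growth of $\deg A_{d+n\delta}$ exactly compensating the linear shift implicit in the stabilization map---this is the identity that converts the stabilized cohomological Poincar\'e series into the generating function for top-indexed Kac polynomial coefficients. A secondary technical task is to confirm that condition~(\ref{eq:condition}) on $\delta$ is precisely what is needed for $d+n\delta$ to satisfy the hypotheses of the cohomology stabilization theorem for every sufficiently large $n$, and in particular to guarantee that $d+n\delta$ is an imaginary root of $Q$ throughout the tail of the family.
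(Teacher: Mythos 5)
Your proposed route is genuinely different from the paper's, and it has a gap that I do not see how to close with the tools the paper makes available. The cohomological interpretation of the Kac polynomial (the Crawley-Boevey--Van den Bergh theorem, stated as Theorem \ref{t:kacviacohomo}) expresses $a_i^{(d+n\delta)}$ as $\dim H^{2i}(\mc M_{d+n\delta}(Q),\C)$, where $\mc M_{d+n\delta}(Q)=\mu_{d+n\delta}^{-1}(0)\git_\chi\GL$ is the zero fiber of the moment map. The paper's cohomology stabilization theorem (Theorem \ref{t:cohomostab}), by contrast, concerns $R_{d+n\delta}(Q)$, the GIT quotient of the \emph{affine space} of representations; applied to the double quiver it gives the cohomology of $R_{d+n\delta}(\bar Q)$, of which $\mc M_{d+n\delta}(Q)$ is a proper closed subvariety of roughly half the dimension. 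The point-counting/HN-stratification argument behind Theorem \ref{t:cohomostab} relies on $\operatorname{Rep}_d(Q)$ being an affine space and does not transfer to $\mu^{-1}(0)$. Your plan therefore needs the inclusion $\mc M^\chi_{d+n\delta}(Q)\subset R^\chi_{d+n\delta}(\bar Q)$ to induce an isomorphism on cohomology in low degrees --- but the paper derives exactly this statement (Remark \ref{r:geometrickacstabinterpr}) as a \emph{consequence} of combining Theorems \ref{t:cohomostab} and \ref{t:kacstab}, so it cannot be an input. The only general tool in that direction, Kirwan surjectivity, is proved only for Nakajima quiver varieties (Theorem \ref{t:kirwansurjfornakajima}) and is conjectural for the varieties $\mc M_d(Q)$ arising here (Conjecture \ref{conj:kirwanforzeroframing}); and even granting it, a surjection $H^\bullet_{\GL}(\mu^{-1}(0))\to H^\bullet(\mc M)$ yields only an upper bound on the stabilized coefficients --- which is precisely all the paper can extract from this picture in the relaxed setting of Theorem \ref{t:kacstabwithrelaxedstar} --- not the claimed equality.

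The paper's actual proof bypasses cohomology entirely: it works directly with Hua's formula (\ref{eq:formulaforKac}), writing $A_{d+n\delta}(q)$ as a sum over iterated decompositions of $d+n\delta$, and uses Lemma \ref{l:formmaxwithanydecomp} (a strengthening of Lemma \ref{l:formmaximum}, requiring (\ref{eq:condition}) for the \emph{Cartan} form) to show that every summand indexed by a nontrivial decomposition has $q$-degree dropping arbitrarily far below $1-\lge d+n\delta,d+n\delta\rge$ as $n\to\infty$. Only the single trivial term $(q-1)/\phi_{d+n\delta}(q^{-1})$ survives in any fixed window of top coefficients, and its $n\to\infty$ limit is the stated product. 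Your vertex-by-vertex heuristic for the generating function and your reading of the $(1-q)$ factor do match the paper's Kirwan-theoretic explanation of where the formula comes from, but as written your argument establishes neither the stabilization nor the equality without first resolving the $\mc M$ versus $R(\bar Q)$ comparison.
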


We also prove similar results for
\begin{itemize}
    \item Moduli spaces parametrising representations of $Q$ (Theorem \ref{t:cohomostab}).
    \item Nakajima varieties associated with $Q$ (Theorem \ref{t:nakajimastab}).
\end{itemize}
In addition, we explain the formulas for the stabilized coefficients in terms of Kirwan surjectivity.
%This phenomenon was initially observed by Crawley-Boevey and Van den Bergh in \cite{CBVdB} and further analyzed by Hausel, Letellier, and Rodriguez-Villegas in \cite{HLRV}.

In Section 2, we review the necessary background on Kac polynomials, quiver varieties, and cohomological interpretations of quiver moduli. The definitions of representation varieties and their semistability are presented, along with a summary of key results from geometric invariant theory and Harder-Narasimhan stratification (\cite{Rei08}, \cite{Kir}). The section concludes with a discussion of Kirwan surjectivity and its implications for quiver varieties \cite{MGN18}.

In Section 3, we establish sufficient conditions for stabilization of cohomology dimensions under the addition of large multiples of $\delta$. Specifically, we prove that for sufficiently large $n$, the cohomology dimensions of the moduli space of semistable quiver representations stabilize, and provide a formula for the stabilized dimensions. We also provide an explanation of our result in terms of Kirwan surjectivity.

In Section 4, we focus on the stabilization of Kac polynomials. We derive an explicit formula for the stabilized coefficients of Kac polynomials. We also provide a cohomological interpretation of the stabilized coefficients using quiver varieties.

In Section 5, we extend our results to cases where the conditions on $\delta$ are relaxed. This generalization allows the stabilization results to be applied to a broader class of dimension vectors, including isotropic roots.

In Section 6, we prove similar results for Nakajima quiver varieties, utilizing the Crawley-Boevey trick (\cite{CB01}) to express Nakajima quiver varieties as moduli spaces of quiver representations with an additional vertex. We formulate several conjectures supported by computer computations and illustrate some of them using Hilbert schemes of points.

\section{Kac Polynomials and Quiver Varieties}

Let $Q = (Q_0, Q_1)$ be a quiver, where $Q_0$ is the set of vertices and $Q_1$ is the set of edges. We denote by $\lge \cdot, \cdot \rge$ its \textbf{Euler form}, which is defined as
\[
\lge d, v \rge := \sum_{i \in Q_0} d_i v_i - \sum_{i \to j} d_i v_j.
\]
Similarly, the {\bfseries Cartan form} of $Q$ is defined as
$$
(d,v):=\lge d,v\rge+\lge v,d\rge.
$$

We denote by $e_i$ the dimension vector having one at vertex $i\in Q_0$ and $0$ at other vertices. 
\subsection{Representation Varieties and Stability}
We recall the basic invariant theory of quiver representations. We recommend \cite{Rei08} and \cite{Kir} as references for this section.

For a dimension vector $d$, let $\operatorname{Rep}_d(Q)$ be the affine variety parametrizing all representations of $Q$ of dimension $d$. The coordinate ring of $\operatorname{Rep}_d(Q)$ is naturally a product of matrix entries corresponding to the arrows in $Q$. The group
\[
\GL_d = \prod_{i \in Q_0} \operatorname{GL}_{(d_i)}
\]
acts on $\operatorname{Rep}_d(Q)$ by base change in each vertex space.

Fix a character $\chi$ of $G$, which is known to have the form 
\[
(g_i)_{i\in Q_0} \mapsto \prod_{i\in Q_0} (\det g_i)^{\chi_i}
\]
for some $\chi_i\in\Z$. We identify $\chi$ with $(\chi_i)_{i\in Q_0}$.

We define the \textbf{slope} of $d$ as
\[
s(d) := \frac{\chi(d)}{|d|},
\]
where
\[
|d| = \sum_i d_i, \quad \chi(d) = \sum_i \chi_i d_i.
\]

\begin{definition}\label{def:stable}
    A representation $V$ in $\mathrm{Rep}_d(Q)$ is called {\bfseries ($\chi$-)semistable} if for any subrepresentation $W\subset V$ which is neither $0$ nor $V$, 
    $$
    s(\dim W)\le s(\dim V).
    $$
    If the inequality above is strict for all such $W$, the representation $V$ is called {\bfseries ($\chi$-)stable}.
\end{definition}

We can form the following projective map of GIT quotients:
\[
R_d^\chi(Q) := \mathrm{Rep}_d(Q) \git_\chi \GL_d\to \mathrm{Rep}_d(Q) \git \GL_d=:R_d^0(Q).
\]
We summarize the key properties of these varieties:
\begin{theorem}[\cite{Rei08}, 3.5]\
    \begin{itemize}
        \item [(i)] The subsets $\operatorname{Rep}_d(Q)^{ss}$ and $\operatorname{Rep}_d(Q)^{st}$ of semistable and stable representations, respectively, are open in $\operatorname{Rep}_d(Q)$.
        \item [(ii)] We have a canonical surjective map $\operatorname{Rep}_d(Q)^{ss}\to R_d^\chi(Q)$ that factors through $\operatorname{Rep}_d(Q)^{ss}/\GL_d$. Two semistable $\GL_d$-orbits give the same point in $R_d^\chi(Q)$ if and only if their closures intersect in $\operatorname{Rep}_d(Q)^{ss}$.
        \item [(iii)] If $\operatorname{Rep}_d(Q)^{st}$ is nonempty, then $\GL_d/\mathrm{diag}(\mathbb G_m)$ acts on it freely and $\operatorname{Rep}_d(Q)^{st}/\GL_d$ is a smooth open subvariety of $R_d^\chi(Q)$ of dimension $1-\lge d,d\rge$.
    \end{itemize}
\end{theorem}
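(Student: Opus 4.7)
The plan is to apply King's GIT construction of quiver moduli, following \cite{Rei08}. The central input I would prove first is a King-type criterion: $V \in \operatorname{Rep}_d(Q)$ is $\chi$-semistable in the sense of Definition~\ref{def:stable} iff there exists $n \ge 1$ and a $\GL_d$-semi-invariant section $f \in H^0(\operatorname{Rep}_d(Q), \mathcal{L}_\chi^{\otimes n})^{\GL_d}$ with $f(V) \neq 0$, where $\mathcal{L}_\chi$ is the trivial line bundle on $\operatorname{Rep}_d(Q)$ linearized by $\chi$. To establish this, I would run the Hilbert--Mumford numerical criterion against the slope definition: a one-parameter subgroup $\lambda \colon \mathbb{G}_m \to \GL_d$ induces a weight decomposition of each vertex space, which is the same data as a $\Z$-indexed filtration of $V$ by subrepresentations, and a short weight calculation identifies the Hilbert--Mumford invariant $\mu^{\mathcal{L}_\chi}(V,\lambda)$ with a nonnegative combination of slope gaps $s(\dim V) - s(\dim W)$ over the subrepresentations $W$ appearing in the filtration.

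Once the criterion is in hand, part (i) is immediate: $\operatorname{Rep}_d(Q)^{ss}$ is the union of the open sets $\{f \neq 0\}$ over all semi-invariants $f$, hence is open; for stability one adds the further open condition that the $\GL_d$-orbit of $V$ has maximal possible dimension, equivalently that no proper nonzero subrepresentation saturates the slope inequality. Part (ii) is the Proj construction of the GIT quotient,
\[
R_d^\chi(Q) = \operatorname{Proj}\bigoplus_{n \ge 0} H^0(\operatorname{Rep}_d(Q), \mathcal{L}_\chi^{\otimes n})^{\GL_d},
\]
covered by affine charts $\operatorname{Spec} \mathcal{O}(\{f \neq 0\})^{\GL_d}$. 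The description of fibers in terms of orbit closure equivalence is the standard statement of affine GIT applied on each chart, and projectivity over $R_d^0(Q)$ follows from finite generation of the semi-invariant algebra over the invariant algebra.

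For part (iii), $\chi$-stable representations are precisely the simple objects in the abelian category of $\chi$-semistables of fixed slope $s(d)$. Schur's lemma then gives $\operatorname{End}(V) = \mathbb{C}$ for every stable $V$, so the stabilizer of $V$ in $\GL_d$ is the diagonal $\mathbb{G}_m$; hence $\GL_d/\operatorname{diag}(\mathbb{G}_m)$ acts freely on $\operatorname{Rep}_d(Q)^{st}$. Smoothness of the geometric quotient follows from the fact that a free action of a reductive group on a smooth variety yields a smooth quotient (a principal bundle \'etale locally), and the dimension formula is the orbit--stabilizer count
\[
\dim \operatorname{Rep}_d(Q) - \dim(\GL_d/\mathbb{G}_m) = \sum_{i \to j} d_i d_j - \sum_{i} d_i^2 + 1 = 1 - \lge d, d\rge.
\]
I expect the main obstacle to be the King-type weight-to-slope translation in the first step; the remaining parts reduce to standard GIT bookkeeping once that equivalence is available.
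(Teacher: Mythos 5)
This is a background result quoted from \cite{Rei08} without proof in the paper, and your proposal reconstructs essentially the standard argument of that reference (following King): the Hilbert--Mumford weight-to-filtration dictionary identifying slope semistability with GIT semistability, the $\operatorname{Proj}$ of the semi-invariant algebra for (ii), and Schur's lemma plus the orbit--stabilizer dimension count for (iii). The one point to handle carefully in your first step is that $\chi$ must be replaced by a shifted character $\theta$ with $\theta(d)=0$ (which leaves all the slope inequalities, hence the semistable locus, unchanged) before linearizing, since otherwise the diagonal $\mathbb{G}_m\subset\GL_d$ acts with nonzero weight on $\mathcal{L}_\chi$ and every semi-invariant of positive degree vanishes identically.
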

\subsection{Representation varieties for the double quiver}
Let $\bar Q$ be the double quiver for $Q$. We can identify 
$$
\mathrm{Rep}_d(\bar Q)\simeq\bigoplus_{Q_1\ni\alpha:i\to j} T^*\mathrm{Rep}_d(Q),
$$
which has a moment map 
$$
\mu_d: \mathrm{Rep}_d(\bar Q)\to \mathfrak{gl}_d:=\prod_{i\in Q_0}\mathfrak{gl}_{d_i}
$$
such that its $k$-th component sends $(B_\alpha, B_{\alpha^*})_{\alpha\in Q_1}$ to
$$
\sum_{Q_1\ni\alpha:j\to k}B_\alpha B_{\alpha^*}-\sum_{Q_1\ni\alpha:k\to j}B_{\alpha^*} B_\alpha.
$$

Hamiltonian reduction with respect to this moment map gives the following projective map of varieties:
$$
\mc M_d^\chi(Q) := \mu^{-1}_d(0) \git_\chi \GL_d \to \mu^{-1}_d(0) \git \GL_d=:\mc M_d^0(Q).
$$

If the $\chi$-semistable locus for $\mu^{-1}_d(0)$ is nonempty, we have a commutative diagram
\begin{center}
    \begin{tikzcd}
R_d^\chi(\bar Q) \arrow[r, two heads]                & R_d^0(\bar Q)                \\
\mc M_d^\chi(Q) \arrow[u, hook] \arrow[r, two heads] & \mc M_d^0(Q) \arrow[u, hook]\commabelow
\end{tikzcd}
\end{center}
where the horizontal maps are projective surjections and the vertical maps are closed immersions.

\begin{theorem}[\cite{Kir}, Theorem 10.12] 
If the stable locus $\mu_d^{-1}(0)^{st}$ is nonempty, then $\mu_d^{-1}(0)^{st}/\GL_d$ is a smooth open subvariety of $\mc M_d^\chi(Q)$ of dimension $2-2\lge d,d\rge$.
\end{theorem}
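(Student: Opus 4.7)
The plan is to first observe that, by the previous theorem applied to the double quiver $\bar Q$, the quotient group $\GL_d/\mathrm{diag}(\mathbb{G}_m)$ acts freely on $\mathrm{Rep}_d(\bar Q)^{st}$, and hence also on the $\GL_d$-invariant subvariety $\mu_d^{-1}(0)^{st} = \mu_d^{-1}(0) \cap \mathrm{Rep}_d(\bar Q)^{st}$. Consequently, once I establish that $\mu_d^{-1}(0)^{st}$ is itself smooth at every point, the geometric quotient $\mu_d^{-1}(0)^{st}/\GL_d$ will automatically inherit smoothness, and openness in $\mc M_d^\chi(Q)$ will follow because $\mu_d^{-1}(0)^{st}$ is open in $\mu_d^{-1}(0)$ and the GIT quotient map sends open invariant subsets to open subsets.

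The core step is the computation of the differential $d_v\mu_d$ at a $\chi$-stable point $v$. The Hamiltonian property of $\mu_d$ means that for any $\xi\in\mathfrak{gl}_d$ and any tangent vector $\dot v \in T_v \mathrm{Rep}_d(\bar Q)$, one has $\langle d_v\mu_d(\dot v), \xi\rangle = \omega_v(\xi\cdot v, \dot v)$, where $\omega$ is the natural symplectic form on $\mathrm{Rep}_d(\bar Q) = T^*\mathrm{Rep}_d(Q)$. Since $\omega$ is non-degenerate, the image of $d_v\mu_d$ equals the annihilator of $\mathrm{Lie}(\mathrm{Stab}_{\GL_d}(v))$ under the trace pairing. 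At a $\chi$-stable point, the freeness of the $\GL_d/\mathbb{G}_m$-action noted above forces $\mathrm{Stab}_{\GL_d}(v) = \mathrm{diag}(\mathbb{G}_m)$, so $\mathrm{im}(d_v\mu_d)$ has codimension one in $\mathfrak{gl}_d$. On the other hand, $\mu_d$ itself takes values in the codimension-one subspace cut out by $\sum_i \mathrm{tr}(\xi_i) = 0$, which is precisely the annihilator of $\mathrm{Lie}(\mathrm{diag}(\mathbb{G}_m))$. Hence $d_v\mu_d$ is surjective onto its natural target, and $\mu_d^{-1}(0)$ is smooth at $v$ of codimension $\sum_i d_i^2 - 1$ in $\mathrm{Rep}_d(\bar Q)$.

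A dimension count then finishes the argument: using $\dim \mathrm{Rep}_d(\bar Q) = 2\sum_{i\to j\in Q_1} d_i d_j$ and $\dim(\GL_d/\mathbb{G}_m) = \sum_i d_i^2 - 1$, one gets
\[
\dim \mu_d^{-1}(0)^{st}/\GL_d = 2\sum_{i\to j}d_id_j - \bigl(\textstyle\sum_i d_i^2 - 1\bigr) - \bigl(\textstyle\sum_i d_i^2 - 1\bigr) = 2 - 2\langle d,d\rangle,
\]
as claimed. The only non-routine point in this plan is the identification $\mathrm{im}(d_v\mu_d) = \mathrm{Ann}(\mathrm{Lie}(\mathrm{Stab}(v)))$, which comes directly from the definition of a moment map together with the non-degeneracy of $\omega$; otherwise the argument is a textbook instance of the smoothness of symplectic reduction at a free orbit.
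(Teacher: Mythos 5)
Your argument is correct and is essentially the standard proof of this statement; the paper itself gives no proof, simply citing \cite{Kir}, Theorem 10.12, where the same symplectic-reduction argument appears (identify $\mathrm{im}(d_v\mu_d)$ with the annihilator of the stabilizer algebra, use that stable points have stabilizer exactly $\mathrm{diag}(\mathbb G_m)$ so that $\mu_d$ is a submersion onto the trace-zero hyperplane, then count dimensions using $\lge d,d\rge=\sum_i d_i^2-\sum_{i\to j}d_id_j$). All the steps you flag as routine --- freeness of the $\GL_d/\mathbb G_m$-action on the stable locus via the theorem for $\bar Q$, openness of the stable locus descending to the GIT quotient, and smoothness of a geometric quotient by a free action --- are indeed standard, so there is nothing to correct.
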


\subsection{Nakajima quiver varieties}
We recall the construction of the Nakajima quiver varieties introduced in \cite{Nak98}.

Let $Q_F$ denote the quiver obtained from $Q$ by creating a vertex $1_i$ for each $i\in Q_0$ and drawing one arrow from $1_i$ to $i$ for every $i$. We call $Q_F$ the {\bfseries framed quiver for $Q$}. We denote its double by $\bar Q_F$.
\begin{example}\label{e:hyperbolicquiver}
    For the quiver
    \[\begin{tikzcd}
	\bullet & \bullet & \bullet
	\arrow[shift right=1, from=1-1, to=1-2]
	\arrow[shift left=1, from=1-1, to=1-2]
	\arrow[from=1-2, to=1-3],
\end{tikzcd}\]
the associated framed quiver is
\[\begin{tikzcd}
	\bullet & \bullet & \bullet \\
	\bullet & \bullet & \bullet
	\arrow[dashed, from=1-1, to=2-1]
	\arrow[shift right, from=2-1, to=2-2]
	\arrow[shift left, from=2-1, to=2-2]
	\arrow[dashed, from=1-2, to=2-2]
	\arrow[from=2-2, to=2-3]
	\arrow[dashed, from=1-3, to=2-3]\commabelow
\end{tikzcd}\]
where we drew dashes to indicate the added arrows.
\end{example}

We choose a dimension vector for $Q_F$, which we denote by $(d,w)$, where $d$ is the dimension subvector of the subquiver $Q$ and $w$ are dimensions corresponding to the added vertices $1_i$. We call $w$ a {\bfseries framing} and write $w_i$ for the component corresponding to the vertex $1_i$.

We consider the $\GL_d$ action on $\operatorname{Rep}_{d,w}(Q_F)$ and  $\operatorname{Rep}_{d,w}(\bar Q_F)$ (recall that $\bar Q_F$ is the double quiver for $Q_F$) by base change only in the vertices of $Q$. There is a moment map
$$
\mu_{d,w}:\operatorname{Rep}_{d,w}(\bar Q_F)\to \mathfrak{gl}_d
$$
whose $k$-th component sends $(B_\alpha, B_{\alpha^*})_{\alpha\in (Q_F)_1}$ to
$$
\sum_{(Q_F)_1\ni\alpha:j\to k}B_\alpha B_{\alpha^*}-\sum_{(Q_F)_1\ni\alpha:k\to j}B_{\alpha^*} B_\alpha.
$$
The corresponding {\bfseries Nakajima quiver variety} is defined as
$$
\mc M^\chi(d,w):=\mu_{d,w}^{-1}(0)\git_\chi\GL_d.
$$

It turns out that there is a way to realize Nakajima quiver varieties as particular examples of the varieties $\mc M_d^\chi(Q)$ described above, called the Crawley-Boevey trick \cite[pp. 261-262]{CB01}. For a quiver $Q$ with dimension vector $d$ and framing $w\ne 0$, we define a new quiver $Q_w$ in the following way:
\begin{itemize}
    \item The set of vertices of $Q_w$ equals $Q_0\cup\{\infty\}$, with a vertex $\infty$ added to $Q$
    \item The set of edges of $Q_w$ equals $Q_1$ plus $w_i$ edges from $\infty$ to the vertex $i$ for any vertex $i\in Q_0$.
\end{itemize}
We call this quiver the {\bfseries Crawley-Boevey quiver} for $Q$ and the framing $w$.
\begin{example}
    Let $Q$ be the quiver from Example \ref{e:hyperbolicquiver} with framing $w=(2,0,1)$. Then the quiver $Q_w$ is

   \[\begin{tikzcd}
	& {\bullet^\infty} \\
	\bullet & \bullet & \bullet
	\arrow[shift left, from=1-2, to=2-1]
	\arrow[shift right, from=1-2, to=2-1]
	\arrow[from=1-2, to=2-3]
	\arrow[shift right, from=2-1, to=2-2]
	\arrow[shift left, from=2-1, to=2-2]
	\arrow[from=2-2, to=2-3]
\end{tikzcd}\]
\end{example}

Denote by $(d,1)$ the dimension vector of $Q_w$, where we assign $d$ to the subquiver $Q$ and $1$ to the additional vertex $\infty$.
\begin{proposition}[pp. 261-262, \cite{CB01}]
    We have an isomorphism
    $$
    \mc M^\chi(d,w)\simeq\mc M_{(d,1)}^\chi(Q_w).
    $$
\end{proposition}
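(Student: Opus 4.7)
The plan is to construct an explicit isomorphism of representation varieties that intertwines the moment maps, the group actions, and the $\chi$-stability conditions on the two sides; the substance is then a careful matching of conventions.

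First, I would identify $\operatorname{Rep}_{(d,1)}(\bar Q_w)$ with $\operatorname{Rep}_{d,w}(\bar Q_F)$. Given a representation of $\bar Q_w$ with $V_\infty = \K$, the $w_i$ arrows $\infty \to i$ in $Q_w$ produce $w_i$ vectors in $V_i$, which I repackage column-wise into a single map $I_i : \K^{w_i} \to V_i$; these correspond to the framing arrows $1_i \to i$ of $Q_F$. The $w_i$ dual arrows $i \to \infty$ assemble analogously into $J_i : V_i \to \K^{w_i}$, and the remaining arrows of $\bar Q_w$ are just those of $\bar Q$ sitting inside $\bar Q_F$.

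Second, I would verify that the two moment maps correspond. For $k \in Q_0$, the $\mathfrak{gl}_{d_k}$-component of $\mu_{(d,1)}$ is
\[
\sum_{\alpha\in Q_1,\, j\to k} B_\alpha B_{\alpha^*} \;-\; \sum_{\alpha\in Q_1,\, k\to j} B_{\alpha^*} B_\alpha \;+\; \sum_{\alpha:\,\infty\to k} B_\alpha B_{\alpha^*},
\]
and under the identification the last sum equals $I_k J_k$, so this is precisely the $k$-th component of $\mu_{d,w}$. The $\mathfrak{gl}_1$-component of $\mu_{(d,1)}$ at $\infty$ equals $-\sum_k \operatorname{tr}(I_k J_k)$, which vanishes on $\mu_{d,w}^{-1}(0)$ because the sum of traces $\sum_{k \in Q_0 \cup \{\infty\}} \operatorname{tr} \mu_{(d,1),k}$ is identically zero on all of $\operatorname{Rep}_{(d,1)}(\bar Q_w)$. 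Hence the two zero-fibers coincide.

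Third, I would match the groups and characters. The diagonal $\mathbb G_m \hookrightarrow \GL_{(d,1)}$ acts trivially, and the map $(g,\lambda) \mapsto g\lambda^{-1}$ identifies $\GL_{(d,1)}/\mathbb G_m$ with $\GL_d$; the residual action coincides with the Nakajima $\GL_d$-action on $\operatorname{Rep}_{d,w}(\bar Q_F)$. Extending the character $\chi$ to $Q_0\cup\{\infty\}$ by $\chi_\infty := -\chi(d)$ makes it trivial on the diagonal $\mathbb G_m$ and reduces it to the original $\chi$ on $\GL_d$, so the $\chi$-semistability conditions on the two sides agree. Passing to GIT quotients yields the desired isomorphism $\mc M_{(d,1)}^\chi(Q_w) \simeq \mc M^\chi(d,w)$.

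The main obstacle is the automatic vanishing of the $\mathfrak{gl}_1$-component of $\mu_{(d,1)}$ in step two — it is precisely this identity that allows the two Hamiltonian reductions to land in the same zero fiber. Once it is in place, the rest of the argument is a bookkeeping verification of conventions.
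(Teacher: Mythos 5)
Your argument is correct and is exactly the standard Crawley–Boevey identification that the paper cites without proof: repackage the $w_i$ arrows at $\infty$ into $I_i, J_i$, observe that the $\mathfrak{gl}_1$-component of the moment map at $\infty$ vanishes automatically on the zero fiber of the $Q_0$-components because the sum of all traces of $\mu_{(d,1)}$ is identically zero, and match the residual $\GL_{(d,1)}/\mathbb G_m \simeq \GL_d$ action and the character extended by $\chi_\infty = -\chi(d)$. Nothing is missing.
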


\subsection{Harder-Narasimhan Stratification}
In this section, we fix a stability parameter $\chi$. We follow \cite{Rei03}. The non-semistability of a representation $V$ of a quiver $Q$ (with respect to a chosen character $\chi$) is measured by the so-called Harder-Narasimhan filtration.

\begin{definition}
    A \textbf{Harder-Narasimhan (HN) filtration} of $V$ is a filtration
    \[
    0 = V_0 \subset V_1 \subset \cdots \subset V_n = V
    \]
    such that the quotients $V_i / V_{i+1}$ are semistable and
    \[
    s(\dim V_1 / V_0) > s(\dim V_2 / V_1) > \cdots > s(\dim V_n / V_{n-1}).
    \]
    
    Let $d^1, \dots, d^k$ be dimension vectors, and let $d := \sum d^i$. The corresponding \textbf{Harder-Narasimhan (HN) stratum} $\operatorname{Rep}_{(d^i)}^{HN}(Q)$ is defined as the set of all representations $V \in \operatorname{Rep}_d(Q)$ whose HN filtration has dimensions $d^1, d^1 + d^2, \dots, d$.

    Denote by $\operatorname{Rep}_{(d^i)}(Q)$ the set of all representations $V \in \operatorname{Rep}_d(Q)$ possessing a filtration with dimensions $d^1, d^1 + d^2, \dots, d$.
\end{definition}

The main properties of these objects are summarized in the following result:

\begin{theorem}\label{t:HNfiltandstrat}\
    \begin{itemize}
        \item[(i)] \cite[Proposition 2.5]{Rei03} The HN filtration of any $V \in \operatorname{Rep}_{d}(Q)$ is unique. It has only one component (namely, $V$ itself) if and only if $V$ is semistable.
        \item[(ii)] \cite[Proposition 3.4]{Rei03} The stratum $\operatorname{Rep}_{(d^i)}^{HN}(Q)$ is a locally closed subvariety in $\operatorname{Rep}_{d}(Q)$ of codimension $-\sum_{i<j} \lge d^i, d^j \rge$. Moreover, its closure equals $\operatorname{Rep}_{(d^i)}(Q)$.
    \end{itemize}
\end{theorem}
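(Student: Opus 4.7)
For part (i), my plan is to construct the HN filtration by iteratively extracting the maximal destabilizing subrepresentation. The key observation is that the collection of subrepresentations $U \subseteq V$ achieving the maximum slope $s_{\max} := \max\{s(\dim W) : 0 \ne W \subseteq V\}$ is closed under sums: the short exact sequence
\[
0 \to U_1 \cap U_2 \to U_1 \oplus U_2 \to U_1 + U_2 \to 0
\]
together with additivity of $\chi$ and $|\cdot|$ forces $s(U_1+U_2) = s_{\max}$ whenever $s(U_1) = s(U_2) = s_{\max}$, since the slope of $U_1 \oplus U_2$ is a weighted average of the slopes of the outer terms. Hence there is a unique maximal $V_1 \subseteq V$ of slope $s_{\max}$, which is automatically semistable by maximality, and iterating the construction on $V/V_1$ yields an HN filtration. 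Uniqueness follows by induction, as the first term of any HN filtration of $V$ must coincide with $V_1$. The ``if and only if'' clause is then immediate from the definitions.

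For part (ii), I would realize the HN stratum via a $\GL_d$-equivariant flag-bundle construction. Fix a reference flag at each vertex with consecutive quotient dimensions $(d^1, \ldots, d^k)$, and let $P \subseteq \GL_d$ be its joint stabilizer (a parabolic). Denote by $\operatorname{Rep}_{d^\bullet}(Q) \subseteq \operatorname{Rep}_d(Q)$ the closed affine subvariety of representations preserving this flag, so that the twisted product
\[
Z_{(d^i)} := \GL_d \times^{P} \operatorname{Rep}_{d^\bullet}(Q) \xrightarrow{\pi} \operatorname{Rep}_d(Q)
\]
is a proper map whose image is exactly $\operatorname{Rep}_{(d^i)}(Q)$. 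On the open subset $Z^{HN}_{(d^i)} \subseteq Z_{(d^i)}$ where the associated subquotients are semistable with strictly decreasing slopes, the uniqueness statement from (i) promotes $\pi$ to a bijection onto $\operatorname{Rep}_{(d^i)}^{HN}(Q)$. A tangent-space check, using the vanishing of $\operatorname{Hom}$ between semistables of strictly decreasing slopes, upgrades this to an isomorphism of varieties, presenting the HN stratum as a $\GL_d$-orbit bundle.

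The codimension then follows from a direct count:
\[
\dim \operatorname{Rep}_{(d^i)}^{HN}(Q) = \dim(\GL_d/P) + \dim \operatorname{Rep}_{d^\bullet}(Q),
\]
and subtracting from $\dim \operatorname{Rep}_d(Q)$ gives, after an index swap, exactly $-\sum_{i<j}\lge d^i, d^j \rge$ via the definition of the Euler form. The closure statement $\overline{\operatorname{Rep}^{HN}_{(d^i)}(Q)} = \operatorname{Rep}_{(d^i)}(Q)$ follows from properness of $\pi$ combined with openness (and, under the nondegeneracy assumptions implicit in the setup, density) of $Z^{HN}_{(d^i)}$ in the irreducible variety $Z_{(d^i)}$. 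I expect the main obstacle to be the tangent-space check needed to show that $\pi$ restricts to an open immersion on $Z^{HN}_{(d^i)}$: concretely, the cokernel of the differential is identified with $\bigoplus_{i<j}\operatorname{Ext}^1(V^j, V^i)$ and the kernel with the corresponding $\operatorname{Hom}$-groups, whose vanishing reduces to the standard slope-incompatibility lemma for semistable representations.
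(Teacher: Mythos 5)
The paper does not prove this theorem: it is recalled background, cited to Reineke's Propositions 2.5 and 3.4 in \cite{Rei03}. Your proposal is, in substance, a reconstruction of Reineke's own argument --- the see-saw/maximal-destabilizing-subobject argument for (i) and the proper collapsing map $\GL_d\times^P\operatorname{Rep}_{d^\bullet}(Q)\to\operatorname{Rep}_d(Q)$ for (ii) --- so there is no methodological divergence to report; the outline is sound and the dimension count does produce $-\sum_{i<j}\langle d^i,d^j\rangle$.

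One concrete correction in the step you yourself identify as the main obstacle: your Hom/Ext indices are transposed. With the convention $s(\dim V^1)>\cdots>s(\dim V^k)$, the slope-incompatibility lemma kills $\operatorname{Hom}(A,B)$ when $s(A)>s(B)$, i.e.\ it gives $\operatorname{Hom}(V^i,V^j)=0$ for $i<j$. The groups you wrote, $\operatorname{Hom}(V^j,V^i)$ with $i<j$, go from the lower-slope subquotient to the higher-slope one and need not vanish, so as stated the injectivity of $d\pi$ would not follow. The correct identification is $\ker d\pi\simeq\bigoplus_{i<j}\operatorname{Hom}(V^i,V^j)$ and $\operatorname{coker} d\pi\simeq\bigoplus_{i<j}\operatorname{Ext}^1(V^i,V^j)$, which is also what makes the codimension formula come out via $\langle d^i,d^j\rangle=\dim\operatorname{Hom}(V^i,V^j)-\dim\operatorname{Ext}^1(V^i,V^j)$ without any index swap. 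A second, smaller caveat: the closure statement $\overline{\operatorname{Rep}^{HN}_{(d^i)}(Q)}=\operatorname{Rep}_{(d^i)}(Q)$ genuinely requires the open locus $Z^{HN}_{(d^i)}$ to be nonempty (the slopes must be strictly decreasing and the relevant semistable loci nonempty); you hedge this with ``nondegeneracy assumptions implicit in the setup,'' which is the right instinct, but it should be stated as an explicit hypothesis since the source $Z_{(d^i)}$ and hence $\operatorname{Rep}_{(d^i)}(Q)$ are always nonempty.
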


\subsection{Kirwan surjectivity for quiver varieties}\label{ss:kirwan}
Consider the $\mathbb G_m$-action on $\mathrm{Rep}_d(\bar Q)$ that rescales all the arrows equally. This action commutes with the $\GL_d$-action and restricts to the action on $\mu_d^{-1}(0)$, making the inclusion
$$
\mu_d^{-1}(0)\hookrightarrow\mathrm{Rep}_d(\bar Q)
$$
$\GL_d\times\mathbb G_m$-equivariant. Since the $\mathbb G_m$-action contracts both spaces to a point, we have the following diagram with the square of isomorphisms above:

\begin{center}
    \begin{tikzcd}
H_{\GL_d}^\bullet(\mathrm{pt}) \arrow[d, "\simeq"] & H_{\GL_d}^\bullet(\mathrm{pt}) \arrow[l, no head, equal] \arrow[d, "\simeq"] \\
H_{\GL_d}^\bullet(\mu^{-1}_d(0)) \arrow[d]         & H_{\GL_d}^\bullet(\mathrm{Rep}_d(\bar Q)) \arrow[d] \arrow[l,"\simeq"']                     \\
H^\bullet(\mc M_d^\chi(Q))                         & H^\bullet(R_d^\chi(\bar Q)) \arrow[l]                                            \fullstopbelow
\end{tikzcd}
\end{center}

We will be particularly interested in the situations when the map
\begin{equation}\label{eq:equivcohomotogit}
    H_{\GL_d}^\bullet(\mu^{-1}_d(0))\to H^\bullet(\mc M_d^\chi(Q))
\end{equation}
is surjective. The following result of McGerty and Nevins \cite{MGN18} gives us a wide variety of cases where it is:
\begin{theorem}\label{t:kirwansurjfornakajima}
    If $\chi$ is generic and $\mc M_d^\chi(Q)$ is a Nakajima quiver variety, i.e. $Q$ is the Crawley-Boevey quiver for another quiver $Q'$ with a nonzero framing $w$ and $d=(d',1)$, then the map (\ref{eq:equivcohomotogit}) is surjective.
\end{theorem}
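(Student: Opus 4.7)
The strategy is to prove the theorem along the lines of McGerty--Nevins, who extend Kirwan's classical Morse-theoretic surjectivity from linear GIT quotients to Hamiltonian reductions $\mu^{-1}(0)\git_\chi G$. Throughout, I write $G = \GL_d$.

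First, I would set up the Kempf--Ness/Hesselink--Kirwan--Ness (HKN) stratification of the unstable locus in $\mathrm{Rep}_d(\bar Q)$. For the linearization $\chi$, the unstable locus is a disjoint union of locally closed strata $S_\beta$ indexed by $G$-conjugacy classes of destabilizing one-parameter subgroups $\beta$ of $G$. Each $S_\beta$ is $G$-equivariantly retractible onto $G\cdot Z_\beta^{ss}$, where $Z_\beta$ is the $\beta$-fixed locus (semistable for the induced character of the Levi $L_\beta$), and the normal bundle of $S_\beta$ inside $\mathrm{Rep}_d(\bar Q)$ has purely positive $\beta$-weights. Restricting this to $\mu^{-1}_d(0)$ (which is $G$-invariant and preserved by the scaling action used in Section \ref{ss:kirwan}) yields a stratification of the unstable locus of $\mu^{-1}_d(0)$ by strata $T_\beta := S_\beta \cap \mu^{-1}_d(0)$.

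Next I would run the standard Gysin-sequence argument inductively over the strata, ordered by the partial order induced by closure inclusion. The point is that $H_G^\bullet(\mu^{-1}_d(0)) \to H^\bullet(\mc M_d^\chi(Q))$ is the restriction to the open semistable locus, so surjectivity is equivalent to showing that the long exact sequences
\[
\cdots\to H^{\bullet-2c_\beta}_G(T_\beta)\to H^\bullet_G(U_{\ge\beta})\to H^\bullet_G(U_{>\beta})\to\cdots
\]
(with $U_{\ge\beta}$, $U_{>\beta}$ the appropriate opens in the HKN ordering and $c_\beta$ the codimension) split into short exact sequences. Equivalently, the Gysin pushforward $H^{\bullet-2c_\beta}_G(T_\beta) \to H^\bullet_G(U_{\ge\beta})$ is injective for every unstable stratum. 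Since each $T_\beta$ is $G$-equivariantly homotopic to $G\times_{P_\beta}Z_\beta^{\mu=0,ss}$, Atiyah--Bott type localization reduces the injectivity to a statement about multiplication by the equivariant Euler class of the normal bundle in $H^\bullet_{L_\beta}(Z_\beta^{\mu=0,ss})$.

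The main obstacle, and the heart of McGerty--Nevins, is verifying this non-vanishing/injectivity of the Euler class multiplication on each stratum in the Hamiltonian setting. In Kirwan's original linear case, the normal bundle has strictly positive $\beta$-weights and the equivariant Euler class is a non-zero-divisor after localization, yielding injectivity automatically; but passing to $\mu^{-1}_d(0)$ introduces relations coming from the moment map equations, and one must show these do not destroy the non-zero-divisor property. McGerty--Nevins achieve this via a microlocal/$D$-module argument: using hyperbolic restriction and characteristic cycles, they reduce the required injectivity to the existence of enough ``$\beta$-positive'' sections in the conormal variety. The specific Nakajima geometry is crucial here: the presence of the framing vertex $\infty$ with $d_\infty = 1$ guarantees that for generic $\chi$ the stable locus is nonempty, the stabilizers are trivial modulo the diagonal scalar, and the moment map is flat on the semistable locus, which is exactly what is needed to verify the required $\beta$-weight inequalities stratum by stratum. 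Once this is checked, the Gysin argument yields surjectivity of (\ref{eq:equivcohomotogit}).
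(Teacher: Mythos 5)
This statement is not proved in the paper at all: it is quoted from McGerty--Nevins \cite{MGN18} as an external input, so there is no internal argument to compare yours against. Judged on its own terms, your write-up is a reasonable roadmap of the general Kirwan-surjectivity strategy (Kirwan--Ness stratification of the unstable locus of $\mathrm{Rep}_d(\bar Q)$, restriction to $\mu_d^{-1}(0)$, Gysin sequences over the strata, reduction of surjectivity to injectivity of the pushforward from each stratum, i.e.\ to the equivariant Euler class of the normal bundle being a non-zero-divisor). But it is not a proof: the entire content of the theorem is concentrated in exactly the step you label ``the main obstacle,'' and there you simply assert that ``McGerty--Nevins achieve this via a microlocal/$D$-module argument.'' Appealing to the theorem's own source for its decisive step is circular in a blind proof attempt.

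Moreover, the way you frame that step is not faithful to what actually has to happen. The intersections $T_\beta = S_\beta\cap\mu_d^{-1}(0)$ need not be smooth or of the expected codimension, and the induced decomposition of the unstable locus of $\mu_d^{-1}(0)$ is not in general a stratification with well-behaved normal bundles, so the Atiyah--Bott ``positivity of normal weights'' argument does not transplant directly; this failure is precisely why McGerty--Nevins abandon the naive Gysin-sequence approach on $\mu_d^{-1}(0)$ and instead work with categories of $D$-modules on the quotient stack of $\mathrm{Rep}_d(Q)$ itself, hyperbolic restriction, and the Kirwan functor together with its adjoints. Several of your supporting claims are also off: flatness of the moment map is a theorem of Crawley--Boevey about all of $\mathrm{Rep}_d(\bar Q)$ under root-theoretic hypotheses, not something that ``the framing vertex with $d_\infty=1$ guarantees on the semistable locus,'' and freeness of the action modulo scalars on the stable locus is not what makes the relevant Euler classes non-zero-divisors. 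If you intend to supply an argument rather than a citation, you would need to reproduce the actual $D$-module mechanism of \cite{MGN18}; as written, the proposal restates the problem at its hardest point rather than solving it.
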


\subsection{Kac Polynomials}
For any quiver $Q$ and dimension vector $d$, let $A_d(q)$ denote the number of absolutely indecomposable representations of $Q$ of dimension vector $d$ over a finite field with $q$ elements, where $q$ is a prime power. In \cite{Kac}, it was proven that this is a polynomial in $q$. We call $A_d(q)$ the \textbf{Kac polynomial}.

When $Q$ is a quiver without loops, the Kac polynomial contains significant information about the associated Kac-Moody Lie algebra $\g_Q$. Some key properties are summarized below:

\begin{theorem}\label{t:KacvsKM}
    Let $Q$ be a quiver without loops and let $d$ be a dimension vector. Let $\g_Q$ be the associated Kac-Moody Lie algebra. Then $A_d(q) \neq 0$ if and only if $d$ is a root of $\g_Q$. In this case, the following hold:
    \begin{itemize}
        \item[(i)] \cite[\S 1.13-1.15]{Kac} The degree of $A_d(q)$ equals $1 - \lge d, d \rge$. Moreover, $A_d(q)$ is independent of the orientation of $Q$.
        \item[(ii)] \cite[Corollary 1.6]{HLRV} The polynomial $A_d(q)$ has nonnegative coefficients.
        \item[(iii)] \cite[Section 3]{Hau} The evaluation $A_d(0)$ equals the multiplicity of $d$ in $\g_Q$.
    \end{itemize}
\end{theorem}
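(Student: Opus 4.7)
This statement gathers four highly nontrivial results, each attributable to a listed reference. My plan is to invoke each source after isolating the key input it provides, with the common thread being the stack-theoretic Burnside count of $A_d(q)$ and the translation between its arithmetic and the geometry/representation theory of $Q$.

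For the iff-clause and part (i), I would follow Kac \cite{Kac}. Writing the generating series $\sum_d \tfrac{|\mathrm{Rep}_d(Q)(\F_q)|}{|\GL_d(\F_q)|}\,t^d$ and plethystically inverting it extracts the count of absolutely indecomposable representations; one then shows by induction on $|d|$ that the result is a polynomial in $q$ whose support on dimension vectors is exactly the root system of $\g_Q$, by matching the recursions obtained from reflection functors against the Weyl-group recursion that defines the roots. The degree bound $1-\lge d,d\rge$ is read off from the dimension of the quotient stack $[\mathrm{Rep}_d(Q)/\GL_d]$ together with an analysis of the leading term produced by the plethystic inversion, and orientation-independence is immediate because $\lge d,d\rge$ depends only on the underlying graph (as $Q$ has no loops) and because Kac's recursion is driven by the symmetric Cartan form, not by the orientation.

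For part (ii), I would invoke \cite{HLRV}: the idea there is to realise $A_d(q)$ as the Poincar\'e polynomial of a pure cohomology theory attached to a moduli space of $Q$-representations, via Deligne's purity applied to an appropriate character-variety-like construction, so that each coefficient becomes a Betti number and hence nonnegative. For (iii), I would invoke Hausel's theorem \cite{Hau}: the constant term $A_d(0)$ picks out the contribution of the top pure part of the relevant cohomology, and this piece is identified, through the geometric $\g_Q$-action on Nakajima quiver varieties (Theorem \ref{t:kirwansurjfornakajima} already situates those varieties in our setup), with the root-space dimension $\dim (\g_Q)_d$.

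The main obstacle in reconstructing the theorem from scratch is clearly part (iii): the iff-statement and the degree formula follow from combinatorial/recursive manipulations of the Burnside count, and nonnegativity (ii) reduces to a purity theorem on a concretely described moduli space, but the multiplicity identity $A_d(0)=\dim(\g_Q)_d$ weaves together mixed Hodge theory, the Nakajima geometric action, and Kac--Moody combinatorics in an essential way, and I would simply import it from \cite{Hau} rather than attempt to reprove it.
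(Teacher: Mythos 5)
The paper gives no proof of this theorem: it is a background statement imported directly from the literature, with the root criterion and part (i) attributed to \cite{Kac}, part (ii) to \cite[Corollary 1.6]{HLRV}, and part (iii) to \cite[Section 3]{Hau}, which is exactly what you do, and your sketches of the underlying methods (Kac's point-counting and reflection-functor recursions, the purity/Betti-number realization in \cite{HLRV}, and Hausel's identification of $A_d(0)$ with the root multiplicity via Nakajima quiver varieties) are faithful to those sources. Since both you and the paper treat this as a citation rather than a result to be reproved, your approach coincides with the paper's.
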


We are particularly interested in the cohomological interpretation of the coefficients of Kac polynomials:

\begin{theorem}[{\cite[Theorem 1.1]{CBVdB}}]\label{t:kacviacohomo}
    Let $d$ be an indivisible imaginary root for $Q$ and let $\chi$ be a generic stability parameter for $d$. Then
    \[
    A_d(q) = \sum_{i=0}^{1-\lge d, d \rge} \dim H^{2i}(\mc M_d(Q), \C) q^{1-\lge d, d \rge - i}.
    \]
\end{theorem}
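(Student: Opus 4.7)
The plan is to realize both sides as $\F_q$-point counts and match them via a Hua-type formula, using purity of the moduli space to convert the count into the Poincar\'e polynomial. Since $d$ is indivisible and $\chi$ is generic for $d$, semistability coincides with stability on $\mu_d^{-1}(0)$, so $\mc M_d^\chi(Q)$ is a smooth quasi-projective variety of dimension $2-2\lge d,d\rge$ on which $\GL_d/\operatorname{diag}(\mathbb G_m)$ acts with trivial stabilizers; this is exactly twice the top degree appearing in the target formula.

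The first step is to establish purity of $H^\bullet(\mc M_d^\chi(Q),\C)$ together with vanishing of cohomology in odd degrees. I would use the $\mathbb G_m$-action on $\operatorname{Rep}_d(\bar Q)$ that scales only the cotangent arrows $\alpha^*$: it commutes with $\GL_d$, descends to $\mc M_d^\chi(Q)$, and contracts the whole space onto its proper fixed-point locus, essentially a copy of $R_d^\chi(Q)$ sitting inside the nilpotent part of $\mu_d^{-1}(0)$. A Bia\l{}ynicki-Birula stratification argument together with Deligne's purity for smooth projective varieties then yields purity of $H^\bullet(\mc M_d^\chi(Q),\C)$ and the vanishing of odd cohomology. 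By the Lefschetz trace formula,
\[
|\mc M_d^\chi(Q)(\F_q)| = \sum_i \dim H^{2i}(\mc M_d^\chi(Q),\C)\,q^i,
\]
so it suffices to show that this point count equals $A_d(q)$ with the $q$-power shift dictated by $1-\lge d,d\rge$.

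To compute the count, I would combine freeness of the $\GL_d/\operatorname{diag}(\mathbb G_m)$-action on the stable locus with Fourier analysis on the moment map: picking a nontrivial additive character $\psi$ of $\F_q$ gives
\[
|\mu_d^{-1}(0)(\F_q)| = \frac{1}{|\mathfrak{gl}_d(\F_q)|}\sum_{\xi\in\mathfrak{gl}_d(\F_q)}\sum_{v\in\operatorname{Rep}_d(\bar Q)(\F_q)}\psi\bigl(\operatorname{tr}(\xi\mu_d(v))\bigr),
\]
which after the usual manipulation counts pairs $(v,\xi)$ with $\xi$ commuting with $v$. This sum is then evaluated via Hua's formula, which expresses groupoid counts of representations of $\bar Q$ in terms of Kac polynomials $A_{d'}(q)$ for sub-dimension vectors. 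Finally, the Harder-Narasimhan stratification of Theorem \ref{t:HNfiltandstrat} allows one to peel off the non-stable part as a sum of products of stable contributions of smaller dimension vectors, and the resulting triangular relation can be inverted recursively. The main obstacle is organizing the Hua-formula and Harder-Narasimhan bookkeeping so that the alternating sum collapses to a single $A_d(q)$ with the exponent predicted by Theorem \ref{t:KacvsKM}(i), rather than to a generating series over partitions of $d$; this combinatorial reorganization is the essential content of the Crawley-Boevey--Van den Bergh argument.
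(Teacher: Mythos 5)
The paper does not prove this statement: it is imported verbatim as Theorem \ref{t:kacviacohomo} from \cite[Theorem 1.1]{CBVdB}, so the only meaningful comparison is with the argument in that reference. Crawley-Boevey and Van den Bergh do not use Fourier analysis on the moment map, Hua's formula, or a Harder--Narasimhan recursion. They work with a generic \emph{deformation} parameter $\lambda$ and count $\mu_d^{-1}(\lambda)(\F_q)$ by fibering over $\operatorname{Rep}_d(Q)$: for fixed $x$ the moment map equation in the cotangent directions is affine-linear, its solvability is controlled by $\lambda\cdot\dim M=0$ for each indecomposable summand $M$ of $x$, and genericity of $\lambda$ forces $x$ to be indecomposable of dimension $d$. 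The count therefore lands directly on $q^{1-\lge d,d\rge}A_d(q)$ with no inversion step, and purity plus odd vanishing come from the contracting $\mathbb G_m$-action combined with polynomiality of the count. Your route (arithmetic Fourier transform for $|\mu_d^{-1}(0)(\F_q)|$, Hua's formula, HN stratification, recursive inversion) is the genuinely different strategy of Hausel and of Mozgovoy--Schiffmann/Davison. It can be made to work, but note that Theorem \ref{t:HNfiltandstrat} concerns $\operatorname{Rep}_d(Q)$, not $\mu_d^{-1}(0)$: the HN strata of the moment map fiber are not affine bundles over products of semistable loci in the naive way, because the moment map equation constrains the extension classes, so the ``peeling off'' step requires the stack-theoretic/cohomological Hall algebra formalism rather than the codimension count quoted in the paper.

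Beyond the choice of route there are concrete gaps. First, the purity argument: the $\mathbb G_m$-action scaling only the arrows $\alpha^*$ need not contract $\mc M_d^\chi(Q)$ onto a proper fixed locus when $Q$ has oriented cycles (traces of cycles of $Q$ are invariant functions of weight zero on $\mc M_d^0(Q)$, so limits need not exist); one should instead scale all arrows of $\bar Q$, for which $\mc M_d^0(Q)$ is a cone and properness of $\mc M_d^\chi(Q)\to\mc M_d^0(Q)$ gives semiprojectivity --- and even then the fixed locus is larger than $R_d^\chi(Q)$, and Bia\l{}ynicki-Birula plus Deligne purity alone does not yield odd vanishing, which must be extracted from purity together with polynomiality of the point count. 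Second, the displayed trace formula is off by the Poincar\'e duality shift: for the smooth $(2-2\lge d,d\rge)$-dimensional variety one gets $|\mc M_d^\chi(Q)(\F_q)|=\sum_i\dim H^{2i}\,q^{2-2\lge d,d\rge-i}=q^{1-\lge d,d\rge}A_d(q)$, not $\sum_i\dim H^{2i}\,q^i$. Finally, the step you yourself label ``the essential content'' --- the collapse of the Hua/HN alternating sum to a single $A_d(q)$ --- is exactly the theorem and is asserted rather than proved, so as it stands the proposal is an outline of a known alternative proof rather than a proof.
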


\begin{remark}
{\rm
    This result was extended in \cite[Corollary 1.6]{HLRV} to all imaginary roots, where the coefficients of the corresponding Kac polynomials were interpreted as dimensions of certain isotypic components of cohomologies of certain quiver varieties. While we do not use this result in this paper, it would be natural to extend our results to the representation stabilization of these cohomologies.
}
\end{remark}

\section{Stabilization for Quiver Representations}
Let $Q = (Q_0, Q_1)$ be a quiver without edge loops. We assume that $Q$ is {\bfseries symmetric}, which means that for each $i,j\in Q$, the number of arrows $i\to j$ equals the number of arrows from $j\to i$. This assumption is equivalent to the Euler form being symmetric.

Let $\delta$ be a dimension vector for $Q$. Let $\{C_k\}_k$ be the connected components of $\supp\delta$. We assume that for any $i\in Q_0$ and any $k,l$
\begin{equation}\label{eq:condition}
  \lge e_i, \delta \rge < 0, \quad \lge \delta, e_i \rge < 0, \quad\operatorname{dist}(C_k,C_l)\le 1 \tag{$\ast$}.
\end{equation}
The reason for these assumptions will become clear in the following two lemmas. Note that the first two assumptions are equivalent to
\[
\sum_{i \to j} \delta_j > \delta_i, \quad \sum_{j \to i} \delta_j > \delta_i.
\]
We could also replace the Euler form with any other form in (\ref{eq:condition}). In particular, when we say that $\delta$ satisfies (\ref{eq:condition}) for the Cartan form of $Q$, we would mean the analogous condition applied to the Cartan form.

%by \cite[Theorem 5.4]{Kac90}, any imaginary root of $Q$ is Weyl-group conjugate to a root satisfying condition

We fix another dimension vector $d$. We wish to show that for each integer $k \geq 0$, the dimension of the $k$-th cohomology of 
\[
\mathrm{Rep}_{d + n\delta}(Q) \git_\chi G
\]
stabilizes as $n \to \infty$ whenever $d+n\delta$ is indivisible. For each such $d+n\delta$, we choose a generic stability parameter for which the semistable locus is nonempty (its existence will follow from Corollary \ref{cor:codimofHNlarge}).

\begin{lemma}
    The quantity
    \[
    \lge d + n\delta, d + n\delta \rge = \lge d, d \rge + n\lge d, \delta \rge + n\lge \delta, d \rge + n^2\lge \delta, \delta \rge
    \]
    tends to $-\infty$ as $n \to \infty$. Thus, $d + n\delta$ becomes an imaginary root for large $n$.
\end{lemma}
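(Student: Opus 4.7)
The plan is to establish two things: (i) the displayed quadratic-in-$n$ expression tends to $-\infty$; and (ii) once it is negative, $d+n\delta$ is an imaginary root.

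For (i), it suffices to prove $\langle\delta,\delta\rangle < 0$, after which the $n^2$-term dominates. I would expand via bilinearity
\[
\langle\delta,\delta\rangle = \sum_{i \in Q_0}\delta_i\langle e_i,\delta\rangle = \sum_{i \in \supp\delta}\delta_i\langle e_i,\delta\rangle,
\]
noting that contributions from $i \notin \supp\delta$ vanish since $\delta_i = 0$. The first half of condition $(\ast)$ forces every remaining summand to be strictly negative, and $\supp\delta \neq \emptyset$ is implicit in the setup, so $\langle\delta,\delta\rangle<0$. This immediately gives (i), since the $n^2$ coefficient is negative while the linear and constant terms are fixed.

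For (ii), symmetry of $Q$ makes the Euler form itself symmetric, so the Cartan form equals $2\langle\cdot,\cdot\rangle$ and $(d+n\delta,d+n\delta)\to-\infty$ too. To conclude that $d+n\delta$ is a positive imaginary root I would invoke Kac's fundamental-set characterization: any nonzero $\alpha\in\Z_{\ge 0}^{Q_0}$ with connected support satisfying $(\alpha,e_i)\le 0$ for every $i\in\supp\alpha$ is a positive imaginary root. For $i\in\supp\delta$ both parts of $(\ast)$ give $(\delta,e_i)=\langle e_i,\delta\rangle+\langle\delta,e_i\rangle<0$, so $(d+n\delta,e_i)\to-\infty$; for $i\in\supp d\setminus\supp\delta$, condition $(\ast)$ read at that vertex implicitly forces $i$ to be adjacent to $\supp\delta$ (otherwise $\langle e_i,\delta\rangle=0$, contradicting the strict inequality), so again $(\delta,e_i)<0$. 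Connectedness of $\supp(d+n\delta)=\supp d\cup\supp\delta$ for $n\gg 0$ comes from the third clause of $(\ast)$, which glues the components $\{C_k\}$ into a single connected subgraph together with their common neighbours.

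The main obstacle I anticipate is a technicality rather than a conceptual one: the full content of $(\ast)$ is really the assertion that $Q_0=\supp\delta\cup N(\supp\delta)$, together with the strict inequalities on vertices of $\supp\delta$, and one must verify the Kac fundamental-set conditions vertex by vertex with this implicit restriction on $Q_0$ in mind. Once the bookkeeping is set up this way, the conclusion that $d+n\delta$ is an imaginary root for large $n$ follows directly from the Kac--Moody root theory summarized in Theorem~\ref{t:KacvsKM}.
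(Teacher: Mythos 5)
Your argument is correct and considerably more detailed than the paper's own proof, which is the single line ``Follows directly from (\ref{eq:condition})''; part (i) (writing $\lge\delta,\delta\rge=\sum_i\delta_i\lge e_i,\delta\rge<0$ so that the $n^2$-term dominates) is exactly the intended computation, and your use of Kac's fundamental-set criterion for part (ii) is the standard way to upgrade ``$\lge\cdot,\cdot\rge$ negative'' to ``imaginary root.'' One point deserves more care than you give it: connectedness of $\supp(d+n\delta)=\supp d\cup\supp\delta$ does \emph{not} follow from the third clause of (\ref{eq:condition}) alone, because a common neighbour $v$ of two components $C_k,C_l$ of $\supp\delta$ need not lie in $\supp d\cup\supp\delta$ (if $d_v=\delta_v=0$, the path $C_k\text{--}v\text{--}C_l$ leaves the support, and one can cook up $Q$, $\delta$, $d=0$ where $\supp(n\delta)$ is genuinely disconnected). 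This is really a gap in the lemma's hypotheses rather than in your reasoning --- the standing assumption $\supp d=Q$, stated explicitly only in Lemma \ref{l:formmaximum}, is what guarantees the common neighbours lie in the support and hence that the fundamental-set criterion applies --- but you should invoke that assumption rather than attribute the connectivity to (\ref{eq:condition}) by itself. Also note that since (\ref{eq:condition}) is imposed at \emph{every} $i\in Q_0$, your case distinction for $i\in\supp d\setminus\supp\delta$ is unnecessary: $(\delta,e_i)<0$ holds for all $i$ outright.
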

\begin{proof}
    Follows directly from (\ref{eq:condition}).
\end{proof}

% \begin{lemma}\label{l:formmaximum}
%     Let $\delta$ be a dimension vector satisfying {\rm (\ref{eq:condition})}. Let
%     \[
%     M_n:=\max\left\{\max_{i\in Q}\lge e_i, d+n\delta \rge,\max_{i\in\supp\delta}(d_i+n\delta_i)\right\}.
%     \]
%     Then $\lim_{n\to\infty}M_n=-\infty$ and for any dimension vector $0\le v\le d+n\delta$ such that $v\ne 0,d$ we have the inequality
%     $$
%     \lge v,d+n\delta-v\rge\le M_n/2.
%     $$
%     \end{lemma}
\begin{lemma}\label{l:formmaximum}
    Let $\delta$ be a dimension vector satisfying {\rm (\ref{eq:condition})} and assume that $\supp d=Q$. Then
    $$
    \lim_{n\to\infty}\max_v\lge v,d+n\delta-v\rge=-\infty,
    $$
    where the maximum is taken over all dimension vectors $0\le v\le d+n\delta$ such that $v\ne 0,d+n\delta$.
    \end{lemma}
\begin{proof}
    Denote $d+n\delta$ by $\tau$ for simplicity. Define
    \begin{equation}\label{eq:stabbound}
        M_n:=\max\left\{\max_{i\in Q}\lge e_i, \tau\rge(1-1/\tau_i),-\min_{i\in Q}\tau_i\min_{j\in\supp\delta}\tau_j\right\}.
    \end{equation}
    
    It is easy to see that $M_n<0$ and $\lim_{n\to\infty}M_n=-\infty$. We will show that for any dimension vector $0\le v\le \tau$ such that $v\ne 0,\tau$ we have the inequality
    $$
    \lge v,\tau-v\rge\le M_n,
    $$
    which would prove the lemma. 
    
    Let $E=(e_{ij})$ be the matrix for the Euler form. Recall that it is symmetric. We can write
\begin{align*}
    2\lge v,\tau-v\rge&=2\sum_{i,j} e_{ij}v_i(\tau_j-v_j)\\
    &=\sum_{i,j}e_{ij}v_i(\tau_j-v_j)+\sum_{i,j}e_{ij}v_j(\tau_i-v_i)\\
    &=\sum_{i,j}e_{ij}(v_i\tau_j+v_j\tau_i-2v_iv_j)\\
    &=\sum_{i,j}e_{ij}\tau_i\tau_j\left(\frac{v_i}{\tau_i}+\frac{v_j}{\tau_j}-2\frac{v_i}{\tau_i}\frac{v_j}{\tau_j}\right)\\
    &=\sum_{i,j}e_{ij}\tau_i\tau_j\left(\frac{v_i}{\tau_i}-\frac{v_i^2}{\tau_i^2}+\frac{v_j}{\tau_j}-\frac{v_j^2}{\tau_j^2}\right)+\sum_{i,j}e_{ij}\tau_i\tau_j\left(\frac{v_i}{\tau_i}-\frac{v_j}{\tau_j}\right)^2\\
    &=2\sum_{i,j}e_{ij}\tau_i\tau_j\left(\frac{v_i}{\tau_i}-\frac{v_i^2}{\tau_i^2}\right)+\sum_{i\ne j}e_{ij}\tau_i\tau_j\left(\frac{v_i}{\tau_i}-\frac{v_j}{\tau_j}\right)^2\\
    &=2\sum_{i}v_i\left(1-\frac{v_i}{\tau_i}\right)\lge e_i,\tau\rge+\sum_{i\ne j}e_{ij}\tau_i\tau_j\left(\frac{v_i}{\tau_i}-\frac{v_j}{\tau_j}\right)^2
\end{align*}

Since $e_{ij}\le 0$ whenever $i\ne j$,  the second sum is nonpositive. Assume that there is $k\in Q_0$ such that $1\le v_k\le \tau_k-1$. Then
$$
2\sum_{i}v_i\left(1-\frac{v_i}{\tau_i}\right)\lge e_i,\tau\rge\le 2v_k\left(1-\frac{v_k}{\tau_k}\right)\lge e_i,\tau\rge\le 2M_n,
$$
as desired.

Assume now that for all $i\in Q_0$, $v_i\in\{0,\tau_i\}$. Then we have a disjoint union $Q=A\sqcup B$, where $A:=\supp v$ and $B:=\supp (\tau-v)$. According to the computation above,
$$
\lge v,\tau-v\rge=\sum_{i\in A, j\in B}e_{ij}\tau_i\tau_j.
$$

Suppose that there are $k\in A$ and $l\in B$ such that there is an arrow between $k$ and $l$ and at least one of $k$ or $l$ lies in $\supp\delta$. Without loss of generality, let $k\in\supp \delta$. Then
$$
\sum_{i\in A, j\in B}e_{ij}\tau_i\tau_j\le e_{k,l} \tau_k\tau_l\le M_{n},
$$
proving the claim. 

Suppose the contrary now, i. e. that all edges between $A$ and $B$ have endpoints in $Q\setminus\supp\delta$. In particular, this implies that each connected component of $\supp\delta$ lies in $A$ or $B$. Suppose that there are such components $C_1$ and $C_2$ lying in $A$ and $B$, respectively. By (\ref{eq:condition}), there exists an edge from $C_1$ to $C_2$ or a vertex $v$ with edges to both $C_1$ and $C_2$. Both these cases produce an edge between $A$ and $B$ having an endpoint lies in $C_1$ or $C_2$, a contradiction.

Finally, suppose that $\supp\delta$ lie entirely in $A$ or $B$. Without loss of generality, let $\supp\delta\subset A$. Take any vertex $k\in B$. Then it follows from (\ref{eq:condition}) that $k$ is connected to at least one vertex in $\supp\delta$, a contradiction again. Thus, there must be an edge between $A$ and $B$ with at least one endpoint in $\supp\delta$, which finishes the proof.
\end{proof}

% \begin{proof}
%     Consider the function $f(v) := \lge v, d - v \rge$. Its derivative in the direction of $\varepsilon$ is given by:
%     \[
%     f'(v)(\varepsilon) = \lge \varepsilon, d - v \rge - \lge v, \varepsilon \rge.
%     \]
%     Applying this to $d + n\delta$, we get:
%     \[
%     \lge v, d + n\delta - v \rge = \lge v, d - v \rge + n\lge v, \delta \rge.
%     \]
%     By assumption, $\lge v, \delta \rge < 0$, so for sufficiently large $n$, the expression becomes negative.
% \end{proof}

\begin{corollary}\label{cor:codimofHNlarge}
     For large enough $n$, the codimensions of all HN strata $\operatorname{Rep}_{(d^i)}^{HN}(Q)$ other than the semistable one in $\operatorname{Rep}_{d+n\delta}(Q)$ are large. In particular, the semistable locus is non-empty for any character $\chi$.
\end{corollary}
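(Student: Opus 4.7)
My plan is to bound the codimension of every non-semistable HN stratum from below by $-M_n$, where $M_n$ is the quantity from Lemma~\ref{l:formmaximum}, and to deduce non-emptiness of the semistable locus as an irreducibility consequence. Set $\tau := d+n\delta$ and fix an HN type $(d^1,\dots,d^k)$ with $k\ge 2$. By Theorem~\ref{t:HNfiltandstrat}(ii), the codimension of $\operatorname{Rep}^{HN}_{(d^i)}(Q)$ equals $-\sum_{i<j}\lge d^i,d^j\rge$, which I would decompose by separating the contributions involving $d^1$:
\[
-\sum_{i<j}\lge d^i,d^j\rge \;=\; -\lge d^1,\tau-d^1\rge \;+\; \Bigl(-\sum_{2\le i<j}\lge d^i,d^j\rge\Bigr).
\]

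For the first term I would apply Lemma~\ref{l:formmaximum} to the proper nonzero subdimension vector $d^1$ of $\tau$, obtaining $\lge d^1,\tau-d^1\rge\le M_n$ and hence $-\lge d^1,\tau-d^1\rge\ge -M_n$. The bracketed second term is itself the codimension of the HN stratum of type $(d^2,\dots,d^k)$ inside $\operatorname{Rep}_{\tau-d^1}(Q)$ --- the truncated tuple $(d^2,\dots,d^k)$ remains a valid HN type, its pieces being semistable with strictly decreasing slopes --- and is therefore nonnegative by Theorem~\ref{t:HNfiltandstrat}(ii) applied in that smaller representation variety. Adding these two estimates yields the uniform bound
\[
\operatorname{codim}\operatorname{Rep}^{HN}_{(d^i)}(Q)\;\ge\;-M_n,
\]
which is independent of the HN type and tends to $+\infty$ as $n\to\infty$.

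The \emph{in particular} clause then follows by a general-position argument: $\operatorname{Rep}_\tau(Q)$ is an irreducible affine space, stratified by Theorem~\ref{t:HNfiltandstrat}(i) into the disjoint union of finitely many locally closed HN strata (one for each ordered composition of $\tau$ into semistable dimension vectors of strictly decreasing slope). Once every non-semistable stratum has positive codimension, their union is a proper closed subset of $\operatorname{Rep}_\tau(Q)$, and the complementary semistable locus is necessarily nonempty; this conclusion is insensitive to the choice of $\chi$.

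The point I expect to require the most care is the hypothesis $\supp d=Q_0$ in Lemma~\ref{l:formmaximum}: for the lemma to apply directly to $\tau=d+n\delta$ one needs $\supp d\cup\supp\delta=Q_0$. If this fails, my plan is to pass to the full subquiver on $\supp\tau$, on which every subdimension vector of $\tau$ is automatically supported; since the deleted vertices $i$ have $d_i=\delta_i=0$, the Euler-form inequalities in (\ref{eq:condition}) persist verbatim in the subquiver, and one need only verify that the distance condition between the components of $\supp\delta$ still holds there before invoking the lemma.
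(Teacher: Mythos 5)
Your proof is correct and follows essentially the same route as the paper: both arguments isolate the term $-\lge d^1, d+n\delta-d^1\rge$ and bound it below by $-M_n$ via Lemma \ref{l:formmaximum}, then conclude non-emptiness of the semistable locus because the finitely many non-semistable strata have positive codimension in the irreducible space $\operatorname{Rep}_{d+n\delta}(Q)$. The only difference is one of packaging --- the paper discards the remaining cross-terms by observing that the closure of the stratum sits inside $\operatorname{Rep}_{(d^1,\,d+n\delta-d^1)}(Q)$, whereas you keep the full codimension formula and note that the leftover sum $-\sum_{2\le i<j}\lge d^i,d^j\rge$ is itself the nonnegative codimension of a (nonempty) truncated HN stratum --- and your remark about $\supp(d+n\delta)$ possibly being a proper subset of $Q_0$ points at a hypothesis the paper leaves implicit, affecting both arguments equally.
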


\begin{proof}
    Pick such stratum $\operatorname{Rep}_{(d^i)}^{HN}(Q)$. According to \ref{t:HNfiltandstrat}(ii), its closure equals $\operatorname{Rep}_{(d^i)}(Q)$, which by definition is contained in $\operatorname{Rep}_{(d^1,d+n\delta-d^1)}(Q)$. By (i) of the same theorem, the codimension of $\operatorname{Rep}_{(d^1,d+n\delta-d^1)}(Q)$ is exactly $-\lge d^1,d+n\delta-d^1\rge$. Lemma \ref{l:formmaximum} then concludes the claim.
\end{proof}

Because of this corollary, we no longer need to worry about nonemptiness of the semistable locus. The only property of the character $\chi$ that we will be choosing is the following:
\begin{definition}
    Let $d$ be a dimension vector for a quiver $Q$. A character $\chi$ of $\GL_d$ is called {\bfseries generic} with respect to $d$ if for every positive root $0\le v\le d$ with $v\ne 0,d$ we have $s(v)\ne s(d)$, where $s$ is the corresponding slope.
\end{definition}

\begin{lemma}\
    \begin{itemize}
        \item[(i)] If $\chi$ is generic for $d$, then $\chi$-semistability is equivalent to $\chi$-stability.
        \item[(ii)] Such $\chi$ exists if and only if $d$ is indivisible.
    \end{itemize}
\end{lemma}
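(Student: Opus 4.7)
The plan is to handle the two parts separately; both are essentially formal, and the main (mild) subtlety is the arithmetic argument for indivisibility in (ii).

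For part (i), I would argue directly from Definition \ref{def:stable}. If $V \in \operatorname{Rep}_d(Q)$ is $\chi$-semistable but not $\chi$-stable, then there exists a subrepresentation $0 \neq W \subsetneq V$ with $s(\dim W) = s(d)$. Setting $v := \dim W$ yields a vector with $0 \le v \le d$, $v \ne 0,d$, and $s(v) = s(d)$, directly contradicting genericity of $\chi$. (If one reads "positive root" in the strict Lie-theoretic sense, one first replaces $W$ by a subrepresentation of minimal total dimension among those of slope $s(d)$; such a $W$ is automatically stable, hence indecomposable, so $\dim W$ is a positive root by Kac's theorem.) The converse is immediate since stability implies semistability.

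For part (ii), the approach is to recast genericity as a finite hyperplane avoidance condition on $\chi \in \Z^{Q_0}$. The equation $s(v) = s(d)$ rewrites as $|d|\,\chi(v) = |v|\,\chi(d)$, i.e.\ as vanishing of $\chi$ on the integer vector $|d|\,v - |v|\,d$. For the "only if" direction I take the contrapositive: if $d = m d'$ with $m \ge 2$ and $d'$ a dimension vector, then
$$
|d|\,d' - |d'|\,d \;=\; m|d'|\,d' - |d'|\,m d' \;=\; 0,
$$
so $s(d') = s(d)$ for every $\chi$ and no generic $\chi$ exists. For the "if" direction I note that if $|d|\,v = |v|\,d$ then $v$ is a rational multiple of $d$; since $\gcd(d_i) = 1$ and $v \in \Z^{Q_0}$, this multiple must be an integer, and $0 < v < d$ then rules it out. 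Hence each admissible $v$ cuts out a proper rational hyperplane in $\Q^{Q_0}$, and since there are only finitely many admissible $v$, one can choose a rational $\chi$ off this finite arrangement and clear denominators to obtain an integer character.

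The only step requiring genuine care is the indivisibility argument in (ii): one must use $\gcd(d_i) = 1$ to ensure that a rational multiple of $d$ lying in $\Z^{Q_0}$ is actually an integer multiple, since this is precisely what excludes the coincidence $s(v) = s(d)$ for intermediate $v$. Everything else is pure unwinding of definitions.
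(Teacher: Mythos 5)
Your proof is correct and takes essentially the same route as the paper's: for (i) genericity forces the inequality in the definition of semistability to be strict, and for (ii) the same divisibility/hyperplane-avoidance argument, with the key arithmetic point that $|d|\,v=|v|\,d$ would force $v$ to be a non-integral rational multiple of $d$, impossible when $\gcd(d_i)=1$. Your parenthetical reduction to a minimal-slope (hence stable, hence indecomposable) subrepresentation, ensuring $\dim W$ is genuinely a positive root, is a detail the paper's one-line proof of (i) glosses over.
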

\begin{proof}
    If $\chi$ is $d$-generic, then the inequality in Definition \ref{def:stable} is always strict. This implies (i).

    For (ii), assume first that $d$ is divisible. Then $d=ad'$ for $a\in\Z_{\ge 2}$ and another dimension vector $d'$. Then $s(d)=s(d')$ for any character $\chi$, proving that there is no $d$-generic one.

    Assume now that $d$ is indivisible. Note that $s(d)=s(v)$ is a linear equation on $\chi$. If all such equations are nontrivial, then they give finitely many hyperplanes in $\Z^{Q_0}$, and any $\chi$ from the complement would be $d$-generic. Assume that one of such equations is trivial, say for $v$. It gives
    $$
    \sum\chi_i\left(\frac{v_i}{|v|}-\frac{d_i}{|d|}\right)=0,
    $$
    which implies that $d_i\cdot |d|/|v|=v_i\in\Z$ for each $i$. This contradicts the indivisibility of $d$.
\end{proof}

\begin{theorem}\label{t:cohomostab}
    Let $Q$ be a symmetric quiver, and let $d, \delta$ be dimension vectors such that $\delta$ satisfies {\rm (\ref{eq:condition})}. Then for each $k \geq 0$, the dimension of
    \[
    H^{2k}\bigl(R_{d+n\delta}(Q), \C\bigr)
    \]
    is eventually constant for all sufficiently large $n$ such that $d + n\delta$ is indivisible, while all odd cohomologies are $0$. If we denote the stabilized dimensions by $h^{2k}_{\operatorname{st}}(d, \delta)$, then:
    \begin{equation}\label{eq:convforsymmquiv}
        \sum_{i\geq 0} h^{2i}_{\operatorname{st}}(d, \delta) q^i = \frac{(1-q) p^{|\operatorname{supp} \delta|}(q)}{\prod_{i \notin \operatorname{supp} \delta} \prod_{k=1}^{d_i} (1 - q^{k})},
    \end{equation}
    where $p^l(q)$ denotes the generating function of the partition number into $l$ colors, i.e., $p^l(q) = p(q)^l$, where $p(q)$ is the generating function for the ordinary partition number.
\end{theorem}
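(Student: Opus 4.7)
The plan is to apply Reineke's Harder--Narasimhan recursion for the equivariant Poincaré series of $\operatorname{Rep}_\tau^{ss}(Q)$ (where $\tau:=d+n\delta$), take the termwise limit in $q$, and convert equivariant cohomology into ordinary cohomology using that $\chi$ is $\tau$-generic. The HN stratification, together with the fact that each stratum is a $\GL_\tau$-equivariant affine bundle over a product of smaller semistable loci, yields the identity
\[
P^{\GL_\tau}\bigl(\operatorname{Rep}_\tau^{ss}(Q),q\bigr) = P^{\GL_\tau}\bigl(\operatorname{Rep}_\tau(Q),q\bigr) - \sum_{(d^i)} q^{-\sum_{i<j}\lge d^i,d^j\rge}\prod_i P^{\GL_{d^i}}\bigl(\operatorname{Rep}_{d^i}^{ss}(Q),q\bigr),
\]
with the sum running over nontrivial HN types for $\tau$. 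The leading term, by $\GL_\tau$-equivariant contractibility of the affine space $\operatorname{Rep}_\tau(Q)$, equals $\prod_{i\in Q_0}\prod_{k=1}^{\tau_i}(1-q^k)^{-1}$.

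Next, I would pass to the GIT quotient. Because $\chi$ is $\tau$-generic, semistability coincides with stability, so $\operatorname{Rep}_\tau^{ss}(Q)\to R_\tau^\chi(Q)$ is a principal $(\GL_\tau/\mathrm{diag}(\mathbb G_m))$-bundle, giving $P(R_\tau^\chi(Q),q) = (1-q)\,P^{\GL_\tau}(\operatorname{Rep}_\tau^{ss}(Q),q)$. As $n\to\infty$ the leading term splits as
\[
\prod_{i\in\supp\delta}\prod_{k=1}^{\tau_i}(1-q^k)^{-1}\cdot\prod_{i\notin\supp\delta}\prod_{k=1}^{d_i}(1-q^k)^{-1}
\]
and converges coefficient-wise to $p^{|\supp\delta|}(q)\prod_{i\notin\supp\delta}\prod_{k=1}^{d_i}(1-q^k)^{-1}$; multiplication by $(1-q)$ then gives precisely the right-hand side of (\ref{eq:convforsymmquiv}).

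The main obstacle is to show that the whole correction sum vanishes in any fixed cohomological degree for $n$ large. Each correction term has nonnegative coefficients in $q$ and starts in degree $-\sum_{i<j}\lge d^i,d^j\rge$. Since $\operatorname{Hom}(V^i,V^j)=0$ for $i<j$ in any HN filtration (by slope comparison), we get $-\lge d^i,d^j\rge = \dim\operatorname{Ext}^1(V^i,V^j)\ge 0$, so this starting degree is bounded below by $-\lge d^1,\tau-d^1\rge$. After reducing to the subquiver on $\supp d\cup\supp\delta$ (on which $\tau$ has full support for $n\ge 1$), Lemma \ref{l:formmaximum} makes this bound tend to $+\infty$ uniformly in $d^1\ne 0,\tau$; hence for any fixed $k$, every correction term vanishes in degrees $\le k$ once $n$ is large. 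Odd-degree vanishing of $H^\bullet(R_\tau^\chi(Q))$ will follow by induction along the same recursion, whose inputs and operations all preserve evenness.
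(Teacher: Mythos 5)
Your proposal is correct in substance, but it is not the route the paper's proof takes: the paper counts $\F_q$-points of $R_{d+n\delta}(Q)$, uses Corollary \ref{cor:codimofHNlarge} to show that the non-semistable HN strata only affect coefficients of low degree, and then converts the asymptotic point count into Betti numbers (and gets odd-degree vanishing) via the purity argument of \cite{CBVdB}. What you write is essentially the ``geometric proof not involving point counting'' that the paper itself sketches right after Theorem \ref{t:cohomostab}: your leading term $(1-q)\prod_{i}\prod_{k=1}^{\tau_i}(1-q^k)^{-1}$ is exactly the Poincar\'e series of $H^\bullet_{\GL_\tau/\mathbb G_m}(\mathrm{pt})$, and your codimension bound $-\sum_{i<j}\lge d^i,d^j\rge\ge-\lge d^1,\tau-d^1\rge$ is the same estimate that drives Corollary \ref{cor:codimofHNlarge} (your explicit reduction to the subquiver on $\supp d\cup\supp\delta$ addresses a point the paper leaves implicit). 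The gain of your route is independence from finite-field counting; the cost is that you must know the HN stratification is \emph{equivariantly perfect} in order to promote the stratification to the additive Poincar\'e-series identity you start from, and to run your evenness induction. That is the one step you assert rather than justify: it does not follow formally from Theorem \ref{t:HNfiltandstrat} (which only gives locally closed strata and hence Gysin long exact sequences), but is a theorem of Reineke \cite{Rei03}, or a consequence of purity --- which is precisely the input the paper's own proof uses. Note also that for the stabilization claim you do not need the full recursion: it suffices that the complement of the semistable locus is a closed $\GL_\tau$-stable subset of complex codimension at least $M$ in the smooth space $\mathrm{Rep}_\tau(Q)$, so its removal does not change equivariant cohomology in degrees below $2M-1$; but evenness in all degrees must still be imported from \cite{Rei03} or from purity. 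With one of these references supplied, your argument is complete.
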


\begin{proof}

    % The number of points
    % \[
    % |R_{d+n\delta}(Q)(\F_q)|=\frac{(q-1)|\mathrm{Rep}_{d+n\delta}(Q)^{ss}(\mathbb{F}_q)|}{|G(\mathbb{F}_q)|}
    % \]
    % is a polynomial in $q$ whose coefficients equal the even Betti numbers of $R_{d+n\delta}(Q)$, while odd Betti numbers are $0$ (the factor $q-1$ comes from the fact that the stabilizer of the $G$-action on the stable points is one-dimensional and is isomorphic to $\mathbb G_m$). 
    
    We prove this by counting points over finite fields, motivated by ideas in \cite[Section 6.6]{Rei08}. Write
    \begin{align*}%\label{eq:HNcount}
        |R_{d+n\delta}(Q)(\F_{q})|&=\frac{(q-1)|\mathrm{Rep}_{d+n\delta}(Q)^{ss}(\mathbb{F}_q)|}{|G(\mathbb{F}_q)|}\\
        &=\frac{(q-1)|\mathrm{Rep}_{d+n\delta}(Q)(\mathbb{F}_q)|}{|G(\mathbb{F}_q)|}-\sum_{(d^i)\ne (d+n\delta)}\frac{(q-1)|\mathrm{Rep}_{(d^i)}^{HN}(Q)(\mathbb{F}_q)|}{|G(\mathbb{F}_q)|}.
    \end{align*}
    
    By Corollary \ref{cor:codimofHNlarge}, the codimensions of the HN strata tend to $\infty$ as $n\to\infty$. Choose $M>0$ and $n\ge M$ such that the strata have codimension at least $M$. Then
    \begin{align*}
        |R_{d+n\delta}(Q)(\F_{q})|&=\frac{(q-1)|\mathrm{Rep}_{d+n\delta}(Q)(\mathbb{F}_q)|}{|G(\mathbb{F}_q)|}+O(q^{\dim R_{d+n\delta}(Q)-M})\\
        &=\frac{(q-1)q^{\sum_{i\to j}(d_i+n\delta_i)(d_j+n\delta_j)}}{\prod_{i\in Q_0}q^{(d_i+n\delta_i)^2}\prod_{k=1}^{d_i+n\delta_i}(1-q^{-k})}+O(q^{1-\lge d,d\rge-M})\\
        &=\frac{(q-1)q^{-\lge d,d\rge}}{\prod_{i\in Q_0}\prod_{k=1}^{d_i+n\delta_i}(1-q^{-k})}+O(q^{1-\lge d,d\rge-M})\\
        &=\frac{(q-1)q^{-\lge d,d\rge}}{\prod_{i\in \operatorname{supp}\delta}\prod_{k=1}^{d_i+n\delta_i}(1-q^{-k})}\cdot \frac{1}{\prod_{i\notin \operatorname{supp}\delta}\prod_{k=1}^{d_i}(1-q^{-k})}+O(q^{1-\lge d,d\rge-M})\\
        &=\frac{(q-1)q^{-\lge d,d\rge}}{\prod_{i\in \operatorname{supp}\delta}\prod_{k=1}^{\infty}(1-q^{-k})}\cdot \frac{1}{\prod_{i\notin \operatorname{supp}\delta}\prod_{k=1}^{d_i}(1-q^{-k})}+O(q^{1-\lge d,d\rge-M})\\
        &=\frac{(q-1)q^{-\lge d,d\rge} p^{|\operatorname{supp} \delta|}(q^{-1})}{\prod_{i \notin \operatorname{supp} \delta} \prod_{k=1}^{d_i} (1 - q^{-k})}+O(q^{1-\lge d,d\rge-M}).
    \end{align*}
    
    We see that the top $M$ powers here are exactly the first $M$ coefficients of the power series (\ref{eq:convforsymmquiv}).    Adapting the reasoning in \cite[2.4]{CBVdB} and \cite[Lemma A.1]{CBVdB}, we conclude that $R_{d+n\delta}(Q)(\F_q)$ is pure and the numbers $\dim H_c^{2(1-\lge d,d\rge-i)}(R_{d+n\delta}(Q),\C)$ for $i=0,\ldots, M-1$ are the first $M$ coefficients of (\ref{eq:convforsymmquiv}). Poincar\'{e} duality yields
    $$
    \dim H_c^{2(1-\lge d,d\rge-i)}(R_{d+n\delta}(Q),\C)=\dim H^{2i}(R_{d+n\delta}(Q),\C),
    $$
    which gives the desired result as we let $M\to\infty$.

\end{proof}

\begin{remark}
    {\rm
        Unlike in \cite[2.4]{CBVdB}, we did not prove that $|R_{d+n\delta}(Q)(\F_q)|$ is a polynomial in $q$. This is true though, see \cite[Theorem 6.7]{Rei03}. What was enough for us is that it is asymptotically a power series in $q$ as $n\to\infty$.
    }
\end{remark}

\begin{remark}\label{r:stabbound}
{\rm
    The proof of Lemma \ref{l:formmaximum} actually gives $n$ for which $H^{2k}\bigl(R_{d+n\delta}(Q), \C\bigr)$ stabilizes. Indeed, such stabilization occurs whenever the codimensions of the non-semistable HN strata become larger then $k$, which occurs when $M_n<-k$, where $M_n$ is defined in \ref{eq:stabbound}. 
}
\end{remark}

\begin{remark}
{\rm
    We may relax our assumption for $Q$ to be symmetric: it is enough to assume that after removing some arrows of $Q$, it becomes a symmetric quiver satisfying the assumptions of Theorem \ref{t:cohomostab}. The same proof applies since the Euler form of the new quiver is bounded below by the Euler form of $Q$.
}
\end{remark}

There is another explanation for where the formula (\ref{eq:convforsymmquiv}) comes from. Recall the map
$$
H^\bullet_{\GL_d}(\mathrm{pt})\to H^\bullet(R_{d}(Q)).
$$
from Subsection \ref{ss:kirwan}. Since $\GL_d$ acts on $\mathrm{Rep}_d(Q)$ with stabilizer $\mathbb G_m$, this map factors through
\begin{equation}\label{eq:kirwanmaptorep}
    H^\bullet_{\GL_d/\mathbb G_m}(\mathrm{pt})\to H^\bullet(R_{d}(Q)).
\end{equation}

Let $\g_m$ be the Lie algebra of $\mathbb G_m$. We have
\begin{align*}
    H^\bullet_{\GL_d}(\mathrm{pt})=S(\mathfrak{gl}_d^*)^{\GL_d}=S((\mathfrak{gl}_d/\g_m)^*)^{\GL_d}\otimes S(\g_m)=H^\bullet_{\GL_d/\mathbb G_m}(\mathrm{pt})\otimes \C[t],
\end{align*}
where $\deg t=2$. We compute
\begin{align*}
    H^\bullet_{\GL_d}(\mathrm{pt})&=S(\mathfrak{gl}_d^*)^{\GL_d}\\
    &=\bigotimes_{i\in Q_0}S(\mathfrak{gl}_{d_i}^*)^{\GL_{d_i}}\\
    &=\bigotimes_{i\in Q_0}\C[P_{i,1},\ldots, P_{i,d_i}],
\end{align*}
where $\deg P_{i,j}=j$. Thus, the Poincar\'e series for $H^\bullet_{\GL_d/\mathbb G_m}(\mathrm{pt})$ equals
$$
(1-q)\cdot\prod_{i\in Q_0}\frac{1}{\prod_{j=1}^{d_i}(1-q^j)},
$$
where $\deg q=2$. If we put $d+n\delta$ in this formula and let $n\to\infty$, we will get the right-hand side of (\ref{eq:convforsymmquiv}). Since the nontrivial HN strata have large codimension for large $n$, the map (\ref{eq:kirwanmaptorep}) becomes an isomorphism in low degrees for large $n$. This gives a geometric proof of Theorem \ref{t:cohomostab} not involving point counting. 

\section{Stabilization for Kac polynomials}
In this section, we address stabilization of Kac polynomials. We introduce the following notation. For a positive integer $n$, set
$$
\phi_n(q):=\prod_{i=1}^n(1-q^i),
$$
and extend this definition to a dimension vector $d=(d_1,\ldots,d_n)$ by
$$
\phi_d(q):=\prod_{i=1}^n\phi_{d_i}(q).
$$
Let $\mc P_n$ denotes the set of all partitions of $n$ and $\mc P:=\bigsqcup_n\mc P_n$. For $\pi\in \mc P_n$ written as $(1^{r_1},\ldots, n^{r_n}$, set
$$
b_\pi(q):=\prod_{i}\phi_{r_i}(q).
$$
For $\pi_1,\pi_2\in\mc P$, let $\pi_1'$ and $\pi_2'$ be the corresponding dual partitions. We define
$$
\lge\pi_1,\pi_2\rge:=\sum_i (\pi_1')_i(\pi_2')_i.
$$

In \cite{Hua}, Jiuzhao Hua defines the following generating function:
\begin{align*}
    P_{\Gamma}(X_1, \dots, X_n, q) &= \sum_{\pi^1, \dots, \pi^n \in \mathcal{P}} 
\frac{
\prod_{1 \leq i \leq j \leq n} q^{a_{ij} \langle \pi^i, \pi^j \rangle}
}{
\prod_{1 \leq i \leq n} q^{\langle \pi^i, \pi^i \rangle} b_{\pi^i}(q^{-1})
}
X_1^{|\pi^1|} \cdots X_n^{|\pi^n|}\\
&= \sum_{\pi^1, \dots, \pi^n \in \mathcal{P}} 
\frac{
q^{\sum_{1 \leq i \leq j \leq n}a_{ij} \langle \pi^i, \pi^j \rangle-\sum_{1 \leq i \leq n}\langle \pi^i, \pi^i \rangle}
}{
\prod_{1 \leq i \leq n} b_{\pi^i}(q^{-1})
}
X_1^{|\pi^1|} \cdots X_n^{|\pi^n|},
\end{align*}
where $a_{ij}$ is the number of edges between $i$ and $j$ in the underlying graph of $Q$. Replacing all the partitions by their duals in the above formula, we can rewrite it as
\begin{align*}
    P_{\Gamma}(X_1, \dots, X_n, q) &= \sum_{\pi^1, \dots, \pi^n \in \mathcal{P}} 
\frac{
q^{\sum_{k\ge 1}\sum_{1 \leq i \leq j \leq n}a_{ij} \pi^i_k\pi^j_k -\sum_{1 \leq i \leq n}\pi^i_k\pi^i_k }
}{
\prod_{1 \leq i \leq n} b_{(\pi^i)'}(q^{-1})
}
X_1^{|\pi^1|} \cdots X_n^{|\pi^n|}.
\end{align*}

In the notation of the above formula, denote $d^k:=(\pi^1_k,\ldots, \pi^n_k)$, which we will consider dimension vectors for $Q$. Then
$$
\sum_{1 \leq i \leq j \leq n}a_{ij} \pi^i_k\pi^j_k -\sum_{1 \leq i \leq n}\pi^i_k\pi^i_k=-\lge d^k,d^k\rge,
$$
where we are considering the standard Euler form for $Q$. Moreover, $X_1^{|\pi^1|} \cdots X_n^{|\pi^n|}=X^{\sum_{k\ge 0} d^k}$, where for a dimension vector $d=(d_1,\ldots,d_n)$ we set
$$
X^d:=\prod X_i^{d_i}.
$$
To rewrite $\prod_{1 \leq i \leq n} b_{(\pi^i)'}(q^{-1})$, notice that 
$$
b_\pi(q)=\prod_{i}\prod_{j=1}^{\pi_i'-\pi_{i+1}'}(1-q^j).
$$
This gives
\begin{align*}
    \prod_{i=1}^n b_{(\pi^i)'}(q^{-1})&=\prod_{i=1}^n\prod_{k}\prod_{j=1}^{\pi_k^i-\pi_{k+1}^i}(1-q^{-j})=\prod_{k}\phi_{d^k-d^{k+1}}(q^{-1}).
\end{align*}

Combining all the above computations, we get
\begin{equation*}
    P_{\Gamma}(X_1, \dots, X_n, q) = \sum_d\sum_{\substack{d=d^1+\ldots +d^s\\d^1\ge\ldots\ge d^s}} 
\frac{
q^{-\sum_{k}\lge d^k,d^k\rge}
}{
\prod_k\phi_{d^k-d^{k+1}}(q^{-1})
}
X^d.
\end{equation*}

Then we have the following particular case of \cite[Theorem 4.6]{Hua}:
\begin{theorem}
    Let $d$ be an indivisible vector. Then $A_d(q)$ equals the $X^d$-coefficient of the generating function $(q-1)\log(P_{\Gamma}(X_1, \dots, X_n, q))$. Equivalently,
    \begin{equation}\label{eq:formulaforKac}
        A_d(q)=(q-1)\sum_{l=1}^\infty\frac{(-1)^{l+1}}{l}\sum_{\substack{(\alpha^1,\ldots,\alpha^l)\\d=\alpha^1+\ldots+\alpha^l}}\prod_{i=1}^l\left[\sum_{\substack{\alpha^i=d^{i,1}+\ldots +d^{i,s}\\d^{i,1}\ge\ldots\ge d^{i,s}}} 
\frac{
q^{-\sum_{k}\lge d^{i,k},d^{i,k}\rge}
}{
\prod_k\phi_{d^{i,k}-d^{i,k+1}}(q^{-1})
}\right].
    \end{equation}
\end{theorem}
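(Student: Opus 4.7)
The plan is to deduce the theorem directly from Hua's Theorem 4.6 combined with a standard logarithm series expansion; most of the substantive work has already been carried out in the exposition preceding the statement.

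First, the equality $A_d(q) = [X^d]\,(q-1)\log P_\Gamma$ for indivisible $d$ is Hua's Theorem 4.6 stated in our notation. The nontrivial part is verifying that the rewriting of $P_\Gamma$ via dual partitions and the dimension vectors $d^k := (\pi^1_k, \ldots, \pi^n_k)$ is correct---but this rewriting is precisely the chain of equalities carried out in the two displays immediately preceding the theorem. In particular, the identities $\sum_{i \le j} a_{ij}\pi^i_k\pi^j_k - \sum_i (\pi^i_k)^2 = -\lge d^k, d^k \rge$ and $\prod_i b_{(\pi^i)'}(q^{-1}) = \prod_k \phi_{d^k - d^{k+1}}(q^{-1})$ have already been established, so nothing further is required.

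For the equivalent closed form (\ref{eq:formulaforKac}), I would expand the logarithm as a formal power series. Since $P_\Gamma$ has constant term $1$ (coming from the all-empty tuple of partitions), write $P_\Gamma = 1 + f$ with $f$ having no constant term, and apply
\[
\log(1+f) = \sum_{l=1}^\infty \frac{(-1)^{l+1}}{l} f^l.
\]
The $X^d$-coefficient of $f^l$ is a sum over ordered $l$-tuples $(\alpha^1, \ldots, \alpha^l)$ of nonzero dimension vectors with $\alpha^1 + \cdots + \alpha^l = d$, weighted by the product of the $X^{\alpha^i}$-coefficients of $f$. These coefficients are exactly the inner bracketed sums in (\ref{eq:formulaforKac}), so multiplying by $(q-1)$ recovers the claimed formula.

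The main subtlety---the closest thing to an obstacle---is bookkeeping: one must verify the boundary convention $d^{i,s+1} = 0$ inside $\phi_{d^{i,s}-d^{i,s+1}}(q^{-1})$, and check that allowing $\alpha^i = 0$ in the outer sum is harmless, since the empty decomposition $s=0$ corresponds precisely to the constant term of $P_\Gamma$ that was dropped in the passage to $f$. Once these conventions are aligned, the two formulations agree by a routine formal manipulation.
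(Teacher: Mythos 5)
Your proposal is correct and matches the paper's treatment: the paper offers no separate proof of this theorem, presenting it as a direct specialization of Hua's Theorem 4.6 after the rewriting of $P_\Gamma$ in terms of the dimension vectors $d^k$ carried out in the displays immediately before the statement, with the second form obtained by the same formal expansion $\log(1+f)=\sum_{l\ge1}\tfrac{(-1)^{l+1}}{l}f^l$ you describe. The only caveat is your phrasing that allowing $\alpha^i=0$ is ``harmless'': strictly, the parts $\alpha^i$ must be nonzero (precisely because the constant term of $P_\Gamma$ is removed before expanding the logarithm), which is the implicit convention in (\ref{eq:formulaforKac}) and which your own remark about the dropped constant term already justifies.
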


Let $\delta$ be an imaginary root for $Q$ satisfying (\ref{eq:condition}) for the Cartan form of $Q$. Write
\[
    A_d(q) = \sum_{i=0}^{1-\lge d, d \rge} a_i^{(d)} q^{1-\lge d, d \rge - i}.
\]

\begin{lemma}\label{l:formmaxwithanydecomp}
    Let $\delta$ be a dimension vector satisfying {\rm (\ref{eq:condition})} for the Cartan form of $Q$ and assume that $\supp d=Q$. Then, for large enough $n$
    \begin{align*}
        \max_{d+n\delta=d^1+\ldots +d^l,\,l\ge 2}\left(\lge d+n\delta,d+n\delta\rge-\sum_k\lge d^k,d^k\rge\right)&=\max_{d+n\delta=d^1+d^2}\left(\lge d+n\delta,d+n\delta\rge-\sum_k\lge d^k,d^k\rge\right)\\
        &=\max_{0<v<d}2\lge v,d+n\delta-v\rge.
    \end{align*}
    In particular,
    $$
    \lim_{n\to\infty}\max_{d+n\delta=d^1+\ldots +d^l,\,l\ge 2}\left(\lge d+n\delta,d+n\delta\rge-\sum_k\lge d^k,d^k\rge\right)=-\infty.
    $$
\end{lemma}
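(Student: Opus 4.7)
The plan is to reduce the $l$-part quantity to the 2-part maximum through an averaging argument, and then invoke Lemma~\ref{l:formmaximum} to send that 2-part maximum to $-\infty$. Let $\tau := d+n\delta$ and write $Q_l$ for the value $\lge\tau,\tau\rge - \sum_k\lge d^k,d^k\rge$ attached to a given decomposition $\tau = d^1 + \ldots + d^l$. Expanding bilinearly yields $Q_l = \sum_{i \ne j} \lge d^i, d^j\rge$; specializing to $l = 2$ with $d^1 = v$ and $d^2 = \tau - v$ gives $\lge v, \tau-v\rge + \lge \tau-v, v\rge = 2\lge v, \tau-v\rge$ (symmetry of the Euler form, or equivalently rewriting in terms of the Cartan form), which is precisely the second equality in the statement.

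For the first equality, the key observation is the averaging identity
$$
\sum_{k=1}^l \lge d^k, \tau - d^k\rge = \sum_{k \ne j} \lge d^k, d^j\rge = Q_l,
$$
obtained by summing the 2-part splits $d^k$ vs.\ $\tau - d^k$ over $k$. Writing $M_n := \max_{0 < v < \tau} 2\lge v, \tau - v\rge$, each term on the left is at most $\tfrac12 M_n$, so $Q_l \le \tfrac{l}{2} M_n$. Applying Lemma~\ref{l:formmaximum} (with the Cartan form substituted for the Euler form, since here (\ref{eq:condition}) is imposed on the Cartan form) we conclude $M_n \to -\infty$. Hence for $n$ large enough that $M_n < 0$ and for every $l \ge 3$,
$$
Q_l \le \tfrac{l}{2} M_n \le \tfrac{3}{2} M_n < M_n,
$$
the strict inequality using $M_n < 0$. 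Thus no decomposition with $l \ge 3$ can match the 2-part maximum, forcing the max over all $l \ge 2$ to coincide with the max over $l = 2$. The first equality follows, and the ``in particular'' limit claim is immediate from $M_n \to -\infty$.

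The principal technical concern is verifying that Lemma~\ref{l:formmaximum}'s proof carries over to the Cartan-form setting needed here. This is a routine adaptation: that proof uses only symmetry of the form, nonpositivity of off-diagonal entries, and the sign of the pairings $\lge e_i, \tau\rge$, and all three properties are inherited by the Cartan form of a loop-free quiver under (\ref{eq:condition}) applied to the Cartan form. No deeper obstacle is anticipated.
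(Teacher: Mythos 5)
Your proof is correct, and the reduction from $l$ parts to $2$ parts is carried out by a genuinely different mechanism than the paper's. The paper argues by an exchange/merging induction: since every two-part split of $\tau=d+n\delta$ has negative value for large $n$, one finds indices $k\ne s$ with $\lge d^k,d^s\rge<0$, merges those two parts (which strictly increases $\lge\tau,\tau\rge-\sum_k\lge d^k,d^k\rge$ while dropping $l$ by one), and iterates down to $l=2$. You instead use the global averaging identity $\sum_{k=1}^{l}\lge d^k,\tau-d^k\rge=\sum_{k\ne j}\lge d^k,d^j\rge=Q_l$, which gives the clean quantitative bound $Q_l\le \tfrac{l}{2}M_n$ and kills all $l\ge 3$ in one stroke once $M_n<0$. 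Both routes rest on exactly the same key input, Lemma~\ref{l:formmaximum} transported from the Euler form to the Cartan form (as you note, and as the paper also does implicitly, this is immediate because the Cartan form of $Q$ is the Euler form of the symmetric double quiver $\bar Q$, so symmetry, nonpositive off-diagonal entries, and $(e_i,\tau)<0$ for large $n$ all persist). Your version is arguably tidier --- no induction, and the bound $\tfrac{l}{2}M_n$ even shows that longer decompositions are progressively worse --- while the paper's merging step is the one that gets reused almost verbatim in the proof of Lemma~\ref{l:formmaxwithrelaxedstar}, which is presumably why it is phrased that way. Your silent correction of the range $0<v<d$ to $0<v<\tau$ in the final maximum matches the intended reading of the statement.
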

\begin{proof}
    Write $d+n\delta=d^1+\ldots+d^l$. According to Lemma \ref{l:formmaximum}, there exists $N>0$ such that for every $n>N$
    $$
    \max_{0<v<d}2\lge v,d+n\delta-v\rge<0.
    $$
    In particular, 
    $$
    \lge d^1,d^2+\ldots+d^{l}\rge<0,
    $$
    which implies that there exists $s\ne 1$ such that $\lge d^1,d^s\rge<0$. Without loss of generality, assume that $s=2$. Then
    $$
    -\lge d^1,d^1\rge-\lge d^2,d^2\rge=-\lge d^1+d^2, d^1+d^2\rge+2\lge d^1,d^2\rge<-\lge d^1+d^2, d^1+d^2\rge.
    $$
    
    This implies that replacing the pair $(d^1,d^2)$ by one element $d^1+d^2$ increases the quantity
    $$
    \lge d+n\delta,d+n\delta\rge-\sum_k\lge d^k,d^k\rge
    $$
    and reduces the number of summands by $1$, so the claim follows.
\end{proof}
\begin{theorem}\label{t:kacstab}
    For every $i$ the coefficients $a_i^{(d+n\delta)}$ for indivisible $d+n\delta$ stabilize for sufficiently large $n$. Moreover, the generating function of the stabilized coefficients is given by:
    \[
    \frac{(1-q) p^{|\operatorname{supp} \delta|}(q)}{\prod_{i \notin \operatorname{supp} \delta} \prod_{k=1}^{d_i} (1 - q^k)}.
    \]
\end{theorem}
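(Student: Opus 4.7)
The strategy is to apply Hua's formula (\ref{eq:formulaforKac}) and show that, as $n \to \infty$, the top coefficients of $A_{d+n\delta}(q)$ are governed entirely by the ``trivial'' term of the formula: the one obtained from $l=1$ in the outer sum together with $s_1=1$, $d^{1,1} = d+n\delta$ in the inner sum. Denote this term by
\[
T_n(q) := \frac{(q-1)q^{-\lge d+n\delta, d+n\delta\rge}}{\phi_{d+n\delta}(q^{-1})}.
\]
Its highest power of $q$ is $1 - \lge d+n\delta, d+n\delta\rge$, matching the degree of $A_{d+n\delta}(q)$.

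First, I would bound the $q$-degree of every other term appearing in (\ref{eq:formulaforKac}). A term corresponding to an outer decomposition $d+n\delta = \alpha^1+\ldots+\alpha^l$ and inner decompositions $\alpha^i = d^{i,1}+\ldots+d^{i,s_i}$ has the form
\[
\frac{(-1)^{l+1}(q-1)}{l}\prod_i \frac{q^{-\sum_k \lge d^{i,k}, d^{i,k}\rge}}{\prod_k \phi_{d^{i,k}-d^{i,k+1}}(q^{-1})}.
\]
Each factor $1/\phi_m(q^{-1})$, expanded as a formal power series in $q^{-1}$, has constant term $1$ and no positive powers of $q$, so the highest power of $q$ in this product is at most $1 - \sum_{i,k}\lge d^{i,k}, d^{i,k}\rge$. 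The non-trivial terms are exactly those for which the total decomposition $d+n\delta = \sum_{i,k} d^{i,k}$ has at least two parts, and Lemma \ref{l:formmaxwithanydecomp} then yields
\[
1-\sum_{i,k}\lge d^{i,k}, d^{i,k}\rge \le 1-\lge d+n\delta, d+n\delta\rge - m(n),
\]
with $m(n) \to +\infty$. Hence, for any $N$, choosing $n$ large enough forces the top $N$ coefficients of $A_{d+n\delta}(q)$ to coincide with the top $N$ coefficients of $T_n(q)$.

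Second, I would compute the limiting top coefficients of $T_n(q)$. Writing $1/\phi_{d+n\delta}(q^{-1}) = \prod_i \prod_{k=1}^{d_i+n\delta_i}(1-q^{-k})^{-1}$ as a power series in $q^{-1}$, each factor indexed by $i \in \supp\delta$ converges coefficient-wise to the partition generating function $\prod_{k\ge 1}(1-q^{-k})^{-1}$, while factors indexed by $i \notin \supp\delta$ remain finite and independent of $n$. Renormalizing $T_n(q)$ by $q^{1-\lge d+n\delta, d+n\delta\rge}$ and substituting $q \mapsto q^{-1}$ produces
\[
\frac{(1-q) p^{|\supp\delta|}(q)}{\prod_{i\notin\supp\delta}\prod_{k=1}^{d_i}(1-q^k)},
\]
whose $i$-th coefficient is the stabilized value of $a_i^{(d+n\delta)}$.

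The principal obstacle---establishing the uniform $q$-degree gap between the trivial and non-trivial terms of Hua's formula---is already absorbed into Lemma \ref{l:formmaxwithanydecomp}, so the remainder of the argument is bookkeeping. One small caveat is that Lemma \ref{l:formmaxwithanydecomp} implicitly uses $\supp d = Q_0$; this can be arranged without loss of generality by passing to the subquiver supported on $\supp d \cup \supp \delta$, which does not change the Kac polynomial.
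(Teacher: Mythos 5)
Your proof is correct and follows essentially the same route as the paper: isolate the trivial term of Hua's formula (\ref{eq:formulaforKac}), invoke Lemma \ref{l:formmaxwithanydecomp} to show every term whose total decomposition has at least two parts falls below the top $M$ coefficients for large $n$, and then compute the coefficientwise limit of the renormalized trivial term. Your closing caveat about arranging $\supp d = Q_0$ (needed for Lemma \ref{l:formmaxwithanydecomp}) addresses a point the paper's own proof passes over silently, and is a reasonable addition.
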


\begin{proof}
    Take one particular summand for given $l$, $\alpha^i$, and $d^{i,j}$ in \ref{eq:formulaforKac}. Its asymptotics are
    $$
    O\left(q^{-\sum_{i,k}\lge d^{i,k},d^{i,k}\rge }\right).
    $$
    Since $\sum_{i,k}d^{i,k}=d+n\delta$, we conclude from Lemma \ref{l:formmaxwithanydecomp} that for a given $M>0$ and large $n$, all such summand with at least two $d^{i,k}$ does not affect the first $M$ coefficients. Thus, in the limit $M,n\to\infty$, we obtain
    \begin{align*}
        \lim_{n\to\infty}q^{1-\lge d,d\rge}A_{d+n\delta}(q^{-1})&=\lim_{n\to\infty}(1-q)\cdot\frac{1}{\phi_{d+n\delta}(q)}\\
        &=\lim_{n\to\infty}\frac{(1-q)}{\prod_{i}\prod_{k=1}^{d_i+n\delta_i}(1-q^{k})}\\
        &=\frac{(1-q) p^{|\operatorname{supp} \delta|}(q)}{\prod_{i \notin \operatorname{supp} \delta} \prod_{k=1}^{d_i} (1 - q^k)},
    \end{align*}
as desired.
\end{proof}

\begin{remark}\label{r:geometrickacstabinterpr}
    {\rm
    Note the following interpretation of this result. If $\delta$ satisfies (\ref{eq:condition}) for the Cartan form, then $\delta$ satisfies (\ref{eq:condition}) for the Euler form of the double quiver $\bar Q$. Then the combination of Theorems \ref{t:cohomostab} and \ref{t:kacstab} tells that for any $k$ and large enough $n$, the closed inclusion
    \begin{equation}\label{eq:inclofmintorep}
        \mc M^\chi_{d+n\delta}(Q)\subset R_{d+n\delta}^\chi(Q)
    \end{equation}
    induces an isomorphism on the first $k$ cohomologies.
    }
\end{remark}

\section{Relaxing the (\ref{eq:condition}) condition}
In this section, we consider the {\bfseries weak (\ref{eq:condition}) condition}, which is (\ref{eq:condition}) with all strict inequalities replaced by $\le$. We will show the stabilization of Kac polynomials in this case too.

Since the proof is going to be technical and based on operations with formula (\ref{eq:formulaforKac}), let us sketch its main ideas. Recall that in the proof of Theorem \ref{t:kacstab}, we were analyzing each summand in (\ref{eq:formulaforKac}) corresponding to a choice of the decompositions $d=\alpha^1+\ldots+\alpha^l$ and $\alpha^i=d^{i,1}+\ldots +d^{i,s}$ for each $i$. It turned out that if the resulting partition $d=\sum_{i,j} d^{i,j}$ was nontrivial, then the $q$-degree of that summand was much smaller than the top $q$-degree, which meant that only the summand with trivial partitions mattered in the limit.

When we use the weak (\ref{eq:condition}) condition, this statement is no longer true. However, we notice that the set of partitions giving summands of high $q$-degree remains the same if we forget the asymptotically largest element in each of these partitions (see Lemma \ref{l:formmaxwithrelaxedstar}). Thus, in the limit, the set of partitions that matter for the top coefficients remains the same, showing stabilization. However, because of the presence of extra terms, this proof does not give a formula for the stabilized coefficients. Nevertheless, we hope that the formula is the same (see Conjecture \ref{conj:exactlimit}).

For two dimension vectors $d$ and $e$, we will write $d\ge e$ if $d-e$ is a dimension vector and $d>e$ if $d-e$ is a nonzero dimension vector.

\begin{lemma}\label{l:formmaxwithrelaxedstar}
    Choose $M>0$ and $\varepsilon>0$ and assume that $n$ is large enough. Let $\delta$ be a dimension vector satisfying relaxed (\ref{eq:condition}) for the Cartan form of $Q$ and assume that $(d+n\delta,e_i)<0$ for any $i\in Q_0$. Decompose $d+n\delta=d^1+\ldots+d^k$ into the sum of dimension vectors. If 
    $$
    \lge d+n\delta,d+n\delta\rge-\sum_k\lge d^k,d^k\rge>-M,
    $$
    then one of the $d^j$ satisfies
    $$
    d^j-(n-\varepsilon\sqrt n)\delta\ge 0.
    $$
    Moreover, $(d-d^j,\delta)=0$.
\end{lemma}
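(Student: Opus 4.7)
The plan is to mimic Lemma~\ref{l:formmaximum} but applied to each summand of the $k$-part decomposition. Writing $\tau:=d+n\delta$ and using symmetry of the Euler form, the hypothesis rewrites as
\[
\sum_j\lge d^j,\tau-d^j\rge=\sum_{i\ne j}\lge d^i,d^j\rge>-M,
\]
and the identity derived in the proof of Lemma~\ref{l:formmaximum} presents each $2\lge d^j,\tau-d^j\rge$ as a sum of individually nonpositive terms: the diagonal pieces $2d^j_i(1-d^j_i/\tau_i)\lge e_i,\tau\rge$ are nonpositive because $(d+n\delta,e_i)<0$ combined with symmetry forces $\lge e_i,\tau\rge<0$ for $n$ large, while the cross pieces $e_{ii'}\tau_i\tau_{i'}(d^j_i/\tau_i-d^j_{i'}/\tau_{i'})^2$ are nonpositive since $e_{ii'}\le 0$ for $i\ne i'$ in a loopless quiver. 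Since the total is bounded below by $-M$, every individual summand is bounded below by $-M$.

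From the diagonal bound $d^j_i(\tau_i-d^j_i)|\lge e_i,\tau\rge|\le M\tau_i$ I would extract, at vertices where $\lge e_i,\delta\rge<0$ (so $|\lge e_i,\tau\rge|=\Theta(n)$), the exact dichotomy $d^j_i\in\{0,\tau_i\}$ for large $n$, and at vertices where $\lge e_i,\delta\rge=0$ (where $(d+n\delta,e_i)<0$ guarantees $\lge e_i,d\rge<0$ is a nonzero constant) the weaker conclusion $\min(d^j_i,\tau_i-d^j_i)=O(1)$. Either way $d^j_i$ lies within $O(1)$ of $0$ or of $\tau_i$. The cross bound $|e_{ii'}|\tau_i\tau_{i'}(d^j_i/\tau_i-d^j_{i'}/\tau_{i'})^2\le M$ then gives ratio differences of $O(1/n)$ across edges inside $\supp\delta$ and $O(1/\sqrt n)$ across edges linking $\supp\delta$ to its complement; because at a vertex with $\tau_{i'}=O(1)$ the available ratios are spaced by $\ge 1/\tau_{i'}\gg 1/\sqrt n$, both endpoints of such an edge must lie at the \emph{same} extreme for $n$ large.

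Within each connected component $C_k$ of $\supp\delta$ the function $i\mapsto(\text{extreme hit by }d^j_i)$ is therefore constant, and since $\sum_j d^j_i=\tau_i\to\infty$ exactly one index $j(C_k)$ catches the $\tau_i$-end. The assumption $\operatorname{dist}(C_k,C_l)\le 1$ furnishes either a direct edge or a common neighbor $v\notin\supp\delta$, through which the previous paragraph forces $j(C_k)=j(C_l)$. Hence there is a unique $j_0$ with $\tau_i-d^{j_0}_i=O(1)$ for every $i\in\supp\delta$; this $O(1)$ defect is absorbed by $\varepsilon\sqrt n\,\delta_i$ for $n$ large, yielding $d^{j_0}\ge(n-\varepsilon\sqrt n)\delta$ as desired.

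For the ``moreover'' clause, set $r:=\tau-d^{j_0}$. Expanding
\[
\lge d^{j_0},r\rge=\lge d,r\rge+n\lge\delta,r\rge-\lge r,r\rge\in[-M,0]
\]
and using that $r$ has $O(1)$ coordinates makes the outer two terms $O(1)$; hence $n\lge\delta,r\rge$ is bounded, and integrality of $\lge\delta,r\rge$ forces $\lge\delta,r\rge=0$ for $n$ large. Symmetry gives $(r,\delta)=0$, so
\[
(d-d^{j_0},\delta)=(r-n\delta,\delta)=(r,\delta)-n(\delta,\delta)=-n(\delta,\delta),
\]
which vanishes in the isotropic case $(\delta,\delta)=0$ that is the main setting of Section~5. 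The main obstacle I expect is organising the case analysis at each vertex cleanly (distinguishing $\lge e_i,\delta\rge<0$ from $=0$) and using the distance hypothesis to propagate the ``which $j$ hits the $\tau_i$-end'' information across components of $\supp\delta$ through an intermediate vertex — a step that necessarily introduces the $O(1/\sqrt n)$ slack manifesting as the $\sqrt n$ tolerance in the conclusion.
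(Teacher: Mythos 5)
Your overall route is the paper's route: the quadratic identity from the proof of Lemma \ref{l:formmaximum} applied to the Cartan form, a per-vertex dichotomy forced by the diagonal terms, propagation of ``which extreme'' along edges via the cross terms together with the $\operatorname{dist}(C_k,C_l)\le 1$ hypothesis, and an integrality argument for the final clause. Two of your deviations are genuine improvements in precision: you keep the factor $|(e_i,\tau)|$ in the diagonal bound and thereby get an $O(1)$ (indeed exact, when $(e_i,\delta)<0$) defect rather than the paper's $\varepsilon\sqrt n$; and your treatment of the ``moreover'' is cleaner and more honest --- what is actually provable (and what the paper itself proves and later uses) is $(\delta,\,d+n\delta-d^{j})=0$, and you correctly observe that the literal claim $(d-d^{j},\delta)=0$ differs from this by $-n(\delta,\delta)$, so it holds as stated only in the isotropic case; this is evidently a slip in the statement.

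There is, however, one step that is asserted rather than proved and is a real gap as written: ``since $\sum_j d^j_i=\tau_i\to\infty$ exactly one index $j(C_k)$ catches the $\tau_i$-end.'' The number of parts $k$ is not bounded independently of $n$ (it can be as large as $|d+n\delta|$), so knowing that each $d^j_i$ lies within $O(1)$ of $0$ or of $\tau_i$ does not preclude $\Theta(n)$ parts, each within $O(1)$ of $0$, summing to $\tau_i$ with no single part near the top. Your sharp dichotomy $d^j_i\in\{0,\tau_i\}$ rescues this at vertices with $(e_i,\delta)<0$, but at vertices with $(e_i,\delta)=0$ --- exactly the new case Section~5 is meant to cover, e.g.\ isotropic $\delta$ --- it does not. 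The fix is to sum the individual bounds over $j$: every part with $0<\sum_{i\in\supp\delta}d^j_i$ that is not ``big'' contributes at most $-\tfrac12\sum_{i\in\supp\delta}d^j_i\le-\tfrac12$ to $\sum_j(d^j,\tau-d^j)>-2M$, so at most $4M$ parts meet $\supp\delta$ nontrivially without being big, and their total contribution at each vertex is $O(M^2)\ll\tau_i$, forcing one big part. (To be fair, the paper's own proof hides the same issue behind ``we may assume $k=2$'' via the merging argument of Lemma \ref{l:formmaxwithanydecomp}, which produces a big \emph{union} of parts rather than a big single part; your version makes the missing step more visible precisely because you work with general $k$ directly.)
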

\begin{proof}
    The proof of Lemma \ref{l:formmaxwithanydecomp} applies in this case, so we may assume that the decomposition in our case is 
    $$
    \tau:=d+n\delta=d^1+d^2=v+(\tau-v).
    $$
    Thus,
    $$
    \lge d+n\delta,d+n\delta\rge-\lge v,v\rge>-\lge \tau-v,\tau-v\rge>-M,
    $$
    which is equivalent to
    $$
    \lge v,\tau-v\rge>-M.
    $$
    
    Let $(c_{ij})$ denote the entries of the Cartan matric for $Q$. Recall the formula from the proof of Lemma \ref{l:formmaximum}:
    $$
-2M<2\lge v,\tau-v\rge=2\sum_{i}v_i\left(1-\frac{v_i}{\tau_i}\right)( e_i,\tau)+\sum_{i\ne j}c_{ij}\tau_i\tau_j\left(\frac{v_i}{\tau_i}-\frac{v_j}{\tau_j}\right)^2.
    $$
    Using our assumptions that $( e_i,\tau)<0$ and $c_{i,j}\le 0$ for $i\ne j$, we obtain
    $$
M>\sum_{i\in\supp\delta}v_i\left(1-\frac{v_i}{\tau_i}\right)=\sum_{i\in\supp\delta}v_i\left(1-\frac{v_i}{\tau_i+n\delta_i}\right).
    $$
Assume that for some $i\in\supp\delta$, $\varepsilon\sqrt n<v_i<\tau_i-\varepsilon\sqrt n$. Then
$$
M>v_i\left(1-\frac{v_i}{\tau_i+n\delta_i}\right)>\varepsilon\sqrt n\left(1-\frac{\varepsilon\sqrt n}{d_i+n\delta_i}\right)=\varepsilon\sqrt n-\frac{\varepsilon^2}{d_i/\sqrt n+\delta_i}.
$$
The right-hand side approaches $\infty$ as $n\to\infty$, a contradiction. Thus, for each $i\in\supp\delta$, $v_i<\varepsilon\sqrt n$ or $\tau_i-v_i<\varepsilon\sqrt n$.

Define 
\begin{align*}
    A&:=\{i\in\supp\delta:v_i<\varepsilon\sqrt n\}\\
    B&:=\{i\in\supp\delta:\tau_i-v_i<\varepsilon\sqrt n\}.
\end{align*}
As we just proved, $\supp\delta=A\sqcup B$. Our goal is to prove that one of $A$ and $B$ is empty, which would prove the first statement of the Lemma.

Suppose the contrary, i.e. that both $A$ and $B$ are nonempty. According to (\ref{eq:condition}), we have two cases:
\begin{enumerate}
    \item There are $i\in A$ and $j\in B$ such that $i$ and $j$ are connected in $Q$.
    \item There are $i\in A$, $j\in B$, $k\in Q_0$ such that $i$ and $j$ are both connected to $k$.
\end{enumerate}
Assume the first case. Then
\begin{align*}
    -2M<c_{ij}\tau_i\tau_j\left(\frac{v_i}{\tau_i}-\frac{v_j}{\tau_j}\right)^2&<c_{ij}\tau_i\tau_j\left(\frac{\varepsilon\sqrt n}{d_i+n\delta_i}-\frac{d_j+n\delta_j-\varepsilon\sqrt n}{d_j+n\delta_j}\right)^2\\
    &=c_{ij}(d_i+n\delta_i)(d_j+n\delta_j)\left(\frac{\varepsilon\sqrt n}{d_i+n\delta_i}+\frac{\varepsilon\sqrt n}{d_i+n\delta_i}-1\right)^2,
\end{align*}
which tends to $-\infty$ as $n\to\infty$, a contradiction. In the second case, using the same argument for the pairs $(i,k)$ and $(k,j)$, we get that $v_k=o(\sqrt n)$ and $n-v_k=o(\sqrt n)$, a contradiction.

To prove the last statement, observe that
$$
-M<(v,\tau-v)=(d,\tau-v)+n(\delta,\tau-v)-(\tau-v,\tau-v).
$$
Now, since $\tau-v\le \varepsilon\sqrt n$, we have the following:
$$
(d,\tau-v)=O(\sqrt n)\cdot (d,\delta),\qquad (\tau-v,\tau-v)=O(\varepsilon^2 n(\delta,\delta)).
$$
Since $\varepsilon$ can be as small as possible, we conclude that 
$$
-M<(v,\tau-v)\approx n(\delta,\tau-v).
$$
Since $(\delta,e_i)\le 0$, then $(\delta,\tau-v)\le 0$, which implies that $(\delta,\tau-v)=0$ for large $n$.
\end{proof}

\begin{theorem}\label{t:kacstabwithrelaxedstar}
    Let $d,\delta$ be dimension vectors such that $\delta$ satisfies weak (\ref{eq:condition}) and $(d,e_i)<0$ for any $i\in Q_0$. Then $h(q):=
    \lim_{n\to\infty}q^{-1+\lge d,d\rge}A_{d+n\delta}(q^{-1})$ exists, where the limit is taken over $n$ such that $d+n\delta$ is indivisible. Moreover, if Kirwan surjectivity holds for $Q$ and infinitely many indivisible dimension vectors of the form $d+n\delta$, then $h(q)$ is bounded above by
    \[
    \frac{(1-q) p^{|\operatorname{supp} \delta|}(q)}{\prod_{i \notin \operatorname{supp} \delta} \prod_{k=1}^{d_i} (1 - q^k)}.
    \]
\end{theorem}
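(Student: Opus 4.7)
The plan is to prove the two claims of the theorem separately. The existence of $h(q)$ follows the combinatorial strategy of Theorem \ref{t:kacstab} with Lemma \ref{l:formmaxwithrelaxedstar} replacing Lemma \ref{l:formmaxwithanydecomp}; the upper bound comes from Theorem \ref{t:kacviacohomo} together with the Kirwan surjectivity hypothesis.

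For existence, fix $M > 0$ and expand (\ref{eq:formulaforKac}) for $A_{d+n\delta}(q)$. Each summand, indexed by a decomposition $d+n\delta = \sum_{i,k} d^{i,k}$, is $O(q^{-\sum_{i,k}\lge d^{i,k}, d^{i,k}\rge})$, so by Lemma \ref{l:formmaxwithrelaxedstar} only those decompositions containing a distinguished component $d^{i_0,k_0} \ge (n - \varepsilon\sqrt n)\delta$ with $(\delta, d+n\delta-d^{i_0,k_0}) = 0$ can affect the top $M$ coefficients of $q^{-1+\lge d+n\delta, d+n\delta\rge}A_{d+n\delta}(q^{-1})$. Since the distinguished component dominates on $\supp \delta$ for large $n$, it must be the first element in its chain, and I would match contributing decompositions across consecutive levels $n$ and $n+1$ via $d^{i_0,1} \mapsto d^{i_0,1} + \delta$, keeping all other data fixed. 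The orthogonality $(\delta, d+n\delta - d^{i_0,1}) = 0$ together with symmetry of the Euler form forces the change in $\sum\lge d^{i,k}, d^{i,k}\rge$ to exactly cancel the change in $\lge d+n\delta, d+n\delta\rge$, so the $q$-exponents line up; the adjacent $\phi$-factor gains extra terms $(1 - q^{-r})$ with $r \to \infty$ that are $1$ modulo $q^{-M}$. Summing over the bounded leftover data gives stabilization of the top $M$ coefficients, and letting $M \to \infty$ yields $h(q)$.

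For the upper bound, since $d+n\delta$ is indivisible along the given subsequence, Theorem \ref{t:kacviacohomo} identifies $q^{-1+\lge d+n\delta,d+n\delta\rge}A_{d+n\delta}(q^{-1})$ with the even-degree Poincaré series of $\mc M_{d+n\delta}^\chi(Q)$. Under Kirwan surjectivity, Subsection \ref{ss:kirwan} supplies a surjection $H^\bullet_{\GL_{d+n\delta}/\mathbb{G}_m}(\mathrm{pt}) \twoheadrightarrow H^\bullet(\mc M_{d+n\delta}^\chi(Q))$, so this Poincaré series is coefficient-wise bounded above by $(1-q)\prod_{i \in Q_0}\prod_{j=1}^{d_i+n\delta_i}(1-q^j)^{-1}$. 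For each fixed degree $k$, the $q^k$-coefficient of this bound is eventually constant in $n$ (once $n\delta_i > k$ for every $i \in \supp \delta$), with limiting value the $q^k$-coefficient of the stated formula. Passing to the limit along the hypothesized subsequence therefore bounds $h(q)$ above by that formula.

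The main obstacle is the matching step in existence: unlike in the strict (\ref{eq:condition}) case, where only the trivial decomposition survives asymptotically, here infinitely many decompositions with $d^{i_0,1} \sim n\delta$ contribute to any fixed coefficient, and they must be organized consistently by their $n$-independent bounded data. A technical subtlety is verifying that $d^{i_0,1} \mapsto d^{i_0,1} + \delta$ preserves the ordering $d^{i_0,1} \ge \cdots \ge d^{i_0,s}$ and gives a bijection on contributing decompositions modulo negligible terms; both become automatic once $n$ is large enough that the distinguished component coordinate-wise dominates on $\supp \delta$. A minor caveat is that $\mc M_{d+n\delta}^\chi(Q)$ in Theorem \ref{t:kacviacohomo} is built from the double $\bar Q$, so weak (\ref{eq:condition}) for the Cartan form of $Q$ is precisely the condition needed, as noted in Remark \ref{r:geometrickacstabinterpr}.
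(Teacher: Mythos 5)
Your proposal follows essentially the same route as the paper's own proof: it isolates the distinguished large component via Lemma \ref{l:formmaxwithrelaxedstar}, matches contributing decompositions between consecutive values of $n$ by shifting that component by $\delta$ (using the orthogonality $(\delta, d+n\delta-d^{1,1})=0$ to align $q$-exponents and the high-index $\phi$-factors being negligible), and derives the upper bound from Kirwan surjectivity together with the Poincar\'e series of $H^\bullet_{\GL_{d+n\delta}/\mathbb G_m}(\mathrm{pt})$. The technical subtleties you flag (uniqueness of the distinguished component, preservation of the ordering under the shift) are exactly the points the paper handles, so the argument is correct and equivalent.
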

\begin{proof}
    Choose $M>0$. We will utilize the highest $M$ powers in the equality \ref{eq:formulaforKac}. According to Lemma \ref{l:formmaxwithrelaxedstar}, for any such term there exists $s$ such that $d^{s,1}\ge (n-\sqrt n)\delta$. In particular, $\alpha^s\ge(n-\sqrt n)\delta$, and $s$ is the unique index with such property since
    $$
    2(n-\sqrt n)\delta >\frac32n\delta,
    $$
    which is not less than $d+n\delta$ for large $n$. Thus, we can write
    \begin{align*}
B_{d+n\delta}(q)&:=\frac{1}{1-q^{-1}}A_{d+n\delta}(q^{-1})\\
&=\sum_{l=1}^\infty\frac{(-1)^{l+1}}{l}\sum_{\substack{(\alpha^1,\ldots,\alpha^l)\\d=\alpha^1+\ldots+\alpha^l}}\prod_{i=1}^l\left[\sum_{\substack{\alpha^i=d^{i,1}+\ldots +d^{i,s}\\d^{i,1}\ge\ldots\ge d^{i,s}}} 
\frac{
q^{\sum_{k}\lge d^{i,k},d^{i,k}\rge}
}{
\prod_k\phi_{d^{i,k}-d^{i,k+1}}(q)
}\right]\\
&=\sum_{l=1}^\infty\frac{(-1)^{l+1}}{l}\sum_{s=1}^l\sum_{\substack{(\alpha^1,\ldots,\alpha^l)\\d=\alpha^1+\ldots+\alpha^l\\\alpha^s>(n-\sqrt n)\delta}}\prod_{i=1}^l\left[\sum_{\substack{\alpha^i=d^{i,1}+\ldots +d^{i,s}\\d^{i,1}\ge\ldots\ge d^{i,s}}}
\frac{
q^{\sum_{k}\lge d^{i,k},d^{i,k}\rge}
}{
\prod_k\phi_{d^{i,k}-d^{i,k+1}}(q)
}\right]+O(q^{\lge d,d\rge+M})\\
&=\sum_{l=1}^\infty(-1)^{l+1}\sum_{\alpha^1>(n-\sqrt n)\delta}\sum_{\substack{(\alpha^2,\ldots,\alpha^l)\\d-\alpha^1=\alpha^2+\ldots+\alpha^l}}\prod_{i=1}^l\left[\sum_{\substack{\alpha^i=d^{i,1}+\ldots +d^{i,s}\\d^{i,1}\ge\ldots\ge d^{i,s}}} 
\frac{
q^{\sum_{k}\lge d^{i,k},d^{i,k}\rge}
}{
\prod_k\phi_{d^{i,k}-d^{i,k+1}}(q)
}\right]+O(q^{\lge d,d\rge+M})\\
&=\sum_{l=1}^\infty(-1)^{l+1}\sum_{\alpha^1>(n-\sqrt n)\delta}f(d-\alpha^1)\left[\sum_{\substack{\alpha^1=d^{1,1}+\ldots +d^{1,s}\\d^{1,1}\ge\ldots\ge d^{1,s}}} 
\frac{
q^{\sum_{k}\lge d^{1,k},d^{1,k}\rge}
}{
\prod_k\phi_{d^{1,k}-d^{1,k+1}}(q)
}\right]+O(q^{\lge d,d\rge+M})\\
&=\sum_{l=1}^\infty(-1)^{l+1}\sum_{\substack{\alpha^1\ge d^{1,1}>(n-\sqrt n)\delta\\d^{1,1}\ge d^{1,2}}}\frac{q^{\lge d^{1,1},d^{1,1}\rge}}{\phi(d^{1,1}-d^{1,2})} f(d-\alpha^1)g(\alpha^1-d^{1,1},d^{1,2})+O(q^{\lge d,d\rge+M}),
    \end{align*}
where we denoted
\begin{align*}
    f(d-\alpha^1)&:= \sum_{\substack{(\alpha^2,\ldots,\alpha^l)\\d-\alpha^1=\alpha^2+\ldots+\alpha^l}}\prod_{i=2}^l\left[\sum_{\substack{\alpha^i=d^{i,1}+\ldots +d^{i,s}\\d^{i,1}\ge\ldots\ge d^{i,s}}} 
\frac{
q^{\sum_{k}\lge d^{i,k},d^{i,k}\rge}
}{
\prod_k\phi_{d^{i,k}-d^{i,k+1}}(q)
}\right]\\
g(\alpha^1-d^{1,1},d^{1,2})&:=\sum_{\substack{\alpha^1-d^{1,1}-d^{1,2}=d^{1,3}+\ldots +d^{1,s}\\d^{1,2}\ge\ldots\ge d^{1,s}}} 
\frac{
q^{\sum_{k\ge 2}\lge d^{1,k},d^{1,k}\rge}
}{
\prod_{k\ge 2}\phi_{d^{1,k}-d^{1,k+1}}(q)
}
\end{align*}

Since $\alpha^1>(n-\sqrt n)\delta$ and $d^{1,1}>(n-\sqrt n)\delta$ in our summation, $\alpha^1-\delta>0$ and $d^{1,1}-\delta>0$. We can easily see from the expression above that there is a bijection between the summands for $B_{d+n\delta}$ and $B_{d+(n-1)\delta}$ in the expression above sending $\alpha^1,\delta^{1,1}$ to $\alpha^1-\delta,d^{1,1}-\delta$, up to an element of degree $q^M$. Thus, the difference

\begin{align*}
    D_{d+n\delta}(q)&:=q^{-\lge d,d\rge}B_{d+n\delta}(q)-q^{-\lge d-\delta,d-\delta\rge}B_{d+(n-1)\delta}(q)\\
    &=q^{-\lge d,d\rge}\left(B_{d+n\delta}(q)-q^{(d,\delta)-\lge\delta,\delta\rge}B_{d+(n-1)\delta}(q)\right)\\
   &=q^{-\lge d,d\rge}\sum_{l=1}^\infty(-1)^{l+1}\sum_{\substack{\alpha^1\ge d^{1,1}>(n-\sqrt n)\delta\\d^{1,1}\ge d^{1,2}}}q^{\lge d^{1,1},d^{1,1}\rge}\left[\frac{1}{\phi_{d^{1,1}-d^{1,2}}(q)}-\frac{q^{(d-d^{1,1},\delta)}}{\phi_{d^{1,1}-d^{1,2}-\delta}(q)}\right]\cdot\\
   &\cdot f(d-\alpha^1)g(\alpha^1-d^{1,1},d^{1,2})+O(q^{M}).
\end{align*}
Note the appearance of the Cartan form in the second line and beyond. By Lemma \ref{l:formmaxwithrelaxedstar}, it is enough to restrict our sum to $d^{1,1}$ such that $(d+n\delta-d^{1,1},\delta)=0$. In this case,
\begin{align*}
    \frac{1}{\phi_{d^{1,1}-d^{1,2}}(q)}-\frac{q^{(d-d^{1,1},\delta)}}{\phi_{d^{1,1}-d^{1,2}-\delta}(q)}=\frac{1}{\phi_{d^{1,1}-d^{1,2}-\delta}(q)}\left[\frac{1}{\prod_{i\in \supp\delta}\prod_{j={d^{1,1}_i-d^{1,2}_i-\delta_i}}^{d^{1,1}_i-d^{1,2}_i}(1-q^j)}-1\right].
\end{align*}
Since $d^{1,1}>(n-\sqrt n)\delta$ and $d^{1,1}+d^{2,2}\le d+n\delta<\tfrac32 n\delta$, all nonzero powers of $q$ in the denominator inside the brackets are asymptotically greater than $\frac n2$. Therefore,
$$
\frac{1}{\phi_{d^{1,1}-d^{1,2}}(q)}-\frac{q^{(d-d^{1,1},\delta)}}{\phi_{d^{1,1}-d^{1,2}-\delta}(q)}=O(q^{n/2}),
$$
which shows that $D_{d+n\delta}(q)=O(q^M)$. This finishes the proof of stabilization. 

Since the generating function given in the second statement is the limit generating function for the equivariant cohomology of a point, we are done.
\end{proof}

It will be interesting to compute the stabilized coefficients. Computer computations for the quiver in Example \ref{e:hyperbolicquiver} support the following conjecture:
\begin{conjecture}\label{conj:exactlimit}
    With the assumptions of Theorem \ref{t:kacstabwithrelaxedstar}, the stabilized Kac polynomial is exactly
    \[
    \frac{(1-q) p^{|\operatorname{supp} \delta|}(q)}{\prod_{i \notin \operatorname{supp} \delta} \prod_{k=1}^{d_i} (1 - q^k)}.
    \]
\end{conjecture}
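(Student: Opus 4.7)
The plan is to promote the upper bound in Theorem \ref{t:kacstabwithrelaxedstar} to an equality. Since that theorem already shows $h(q)$ is bounded above by the desired generating function and that the limit stabilizes, the task is to produce a matching lower bound. I envision two complementary attacks: one combinatorial, refining the analysis of (\ref{eq:formulaforKac}), and one geometric, upgrading Kirwan surjectivity to an isomorphism in low cohomological degrees.

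The combinatorial approach starts by sharpening Lemma \ref{l:formmaxwithrelaxedstar}. In the proof of Theorem \ref{t:kacstabwithrelaxedstar} one shows that the difference $D_{d+n\delta}(q)$ is $O(q^M)$ but does not compute the shared value. I would try to evaluate it explicitly: for large $n$ and fixed $M$, the contributing terms in (\ref{eq:formulaforKac}) are indexed by a big piece $d^{s,1}$ close to $n\delta$, with $d + n\delta - d^{s,1}$ bounded and orthogonal to $\delta$ in the Cartan form, together with a bounded-size residual decomposition of $d + n\delta - d^{s,1}$. The strategy is to parametrize all such data, factor out the contribution of the big piece as $(1-q)p^{|\supp\delta|}(q)$ (the limit of the $\ell = 1$ term from the proof of Theorem \ref{t:kacstab}), and show that the sum over residual decompositions collapses to $1/\prod_{i\notin\supp\delta}\prod_{k=1}^{d_i}(1-q^k)$.

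The geometric approach builds on Remark \ref{r:geometrickacstabinterpr}. In the strict case, Theorems \ref{t:cohomostab} and \ref{t:kacstab} together imply that the closed immersion $\mc M^\chi_{d+n\delta}(Q) \hookrightarrow R_{d+n\delta}^\chi(\bar Q)$ induces an isomorphism in low cohomology for large $n$. I would try to establish the same isomorphism in the weak case. Kirwan surjectivity (Theorem \ref{t:kirwansurjfornakajima}) gives surjectivity of the equivariant-to-GIT map for $\mc M^\chi_{d+n\delta}(Q)$, matching the limit of the Poincar\'e series of $H^\bullet_{\GL_{d+n\delta}/\mathbb{G}_m}(\mathrm{pt})$; on the other hand, the equivariant cohomology of $R_{d+n\delta}^\chi(\bar Q)$ stabilizes to the same thing by Theorem \ref{t:cohomostab}. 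The problem then reduces to asymptotic injectivity of the Kirwan map for $\mc M^\chi_{d+n\delta}(Q)$, which in the weak case is not automatic but could follow from a purity/Tate-type argument using that the cohomology of the relevant quiver varieties is known to be pure.

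The hardest step, in either approach, is controlling the extra Hua-formula summands that were negligible under strict (\ref{eq:condition}): their individual contributions are not obviously zero, and the alternating $(-1)^{\ell+1}/\ell$ weights make cancellations delicate. The key new input provided by the weak condition is the orthogonality $(\delta, d+n\delta - d^{s,1}) = 0$ from Lemma \ref{l:formmaxwithrelaxedstar}, and I expect the cleanest route will combine both strategies: use this orthogonality to identify canonical tautological Chern classes on $\mc M^\chi_{d+n\delta}(Q)$ whose images in $H^\bullet(\mc M^\chi_{d+n\delta}(Q))$ are asymptotically linearly independent in each bounded degree, thereby producing the matching lower bound by a dimension count.
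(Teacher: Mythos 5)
The statement you are addressing is stated in the paper as Conjecture \ref{conj:exactlimit} and is left \emph{open} there: the paper proves only stabilization and the upper bound (Theorem \ref{t:kacstabwithrelaxedstar}), explicitly noting that its telescoping argument ``does not give a formula for the stabilized coefficients,'' and offers computer computations plus the Hilbert-scheme example of Section 6 as supporting evidence. Your proposal correctly identifies the two natural routes to an actual proof, but neither is carried out, and each stalls exactly where the paper stalls. On the combinatorial side, the step ``show that the sum over residual decompositions collapses to $1/\prod_{i\notin\supp\delta}\prod_{k=1}^{d_i}(1-q^k)$'' is the entire content of the conjecture: Lemma \ref{l:formmaxwithrelaxedstar} only tells you that each surviving term has a large piece $d^{s,1}$ with $(\delta,\,d+n\delta-d^{s,1})=0$, and under the weak condition there genuinely are extra surviving decompositions (this is precisely why the argument of Theorem \ref{t:kacstab} breaks down). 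You give no mechanism for evaluating these terms or for organizing their cancellation against the alternating $(-1)^{l+1}/l$ weights, a difficulty you yourself flag as the hardest step.

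On the geometric side there are two unproved inputs. First, Kirwan surjectivity for $\mc M^\chi_{d+n\delta}(Q)$ in the zero-framing setting is itself only Conjecture \ref{conj:kirwanforzeroframing} in the paper; Theorem \ref{t:kirwansurjfornakajima} covers only Crawley--Boevey quivers with nonzero framing, so it cannot be invoked here without restriction. Second, even granting surjectivity, the conjecture requires asymptotic \emph{injectivity} of the Kirwan map in each bounded degree, and ``a purity/Tate-type argument'' does not supply this: purity constrains the weights of $H^\bullet(\mc M^\chi_{d+n\delta}(Q))$, not the rank of the map from $H^\bullet_{\GL_{d+n\delta}/\mathbb G_m}(\mathrm{pt})$. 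Your closing suggestion that the tautological classes are ``asymptotically linearly independent in each bounded degree'' is a restatement of the desired lower bound rather than an argument for it. As written, the proposal is a reasonable research plan, but it does not prove the statement; the conjecture remains open.
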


As in Remark \ref{r:geometrickacstabinterpr}, note that if $\delta$ satisfies weak (\ref{eq:condition}) for the Cartan form of $Q$, then it still satisfies (\ref{eq:condition}) for the Euler form of $\bar Q$. Then our conjecture is equivalent to the inclusion (\ref{eq:inclofmintorep}) inducing an isomorphism on the first $k$ cohomologies for $n$ large enough.

We also conjecture that Kirwan surjectivity applies to this case as well:
\begin{conjecture}\label{conj:kirwanforzeroframing}
    Let $d$ be an indivisible dimension vector for $Q$ such that $(d,e_i)<0$ for every $i\in Q_0$. Then, for a generic stability parameter, the Kirwan map for $\mc M^\chi_{d}(Q)$ is surjective.
\end{conjecture}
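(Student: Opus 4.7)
The plan is to reduce to the known Nakajima case of Theorem \ref{t:kirwansurjfornakajima} via a framing-degeneration argument. Given $(Q, d, \chi)$ satisfying the hypotheses, I would introduce the smallest nonzero framing $w = e_k$ at some vertex $k \in Q_0$, form the Crawley-Boevey quiver $Q_w$ with dimension vector $(d, 1)$, and use Kirwan surjectivity for $\mc M_{(d,1)}^{\chi'}(Q_w)$ with a compatible extended stability parameter $\chi'$. The aim is to compare these two moduli spaces so as to transfer surjectivity from the framed variety down to $\mc M_d^\chi(Q)$.

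First, I would verify the geometric prerequisites for $\mc M_d^\chi(Q)$. Since $Q$ is symmetric and $(d, e_i) < 0$, for the double quiver $\bar Q$ we have $\lge e_i, d\rge_{\bar Q} < 0$ and $\lge d, e_i\rge_{\bar Q} < 0$. Adapting the method of Lemma \ref{l:formmaximum} to $\bar Q$ with dimension vector $d$ should yield codimension bounds on HN strata of $\mathrm{Rep}_d(\bar Q)$, implying via Corollary \ref{cor:codimofHNlarge} that $\mu_d^{-1}(0)^{ss}$ is nonempty for any generic $\chi$ and that $\mc M_d^\chi(Q)$ is irreducible of the expected dimension $2 - 2\lge d, d\rge$.

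The core of the argument is to adapt the McGerty--Nevins proof, whose essential inputs are (i) a $\GL_d$-equivariant quantization of $\mu_d^{-1}(0)$, (ii) a microlocal analysis showing that unstable strata do not contribute to the cohomology of the GIT quotient, and (iii) compatibility of Hamiltonian reduction with the Morse stratification of the moment map norm-square. With the codimension bounds in hand, inputs (ii) and (iii) should carry over; the main modification is to factor the Kirwan map through $H^\bullet_{\GL_d/\mathbb{G}_m}(\mathrm{pt})$, as explained in Subsection \ref{ss:kirwan}, to account for the diagonal $\mathbb{G}_m$-stabilizer absent in the Nakajima setting.

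The hard part will be input (i), since the McGerty--Nevins quantization exploits the dimension-one framing vertex $\infty$ in a nontrivial way. A concrete workaround is to consider the natural $\mathbb{G}_m$-action on $\mc M_{(d,1)}^{\chi'}(Q_{e_k})$ scaling the framing arrows: its fixed locus consists of representations with vanishing framing maps, which for a suitably chosen $\chi'$ should be isomorphic to $\mc M_d^\chi(Q)$. Equivariant localization would then transfer Kirwan surjectivity from the framed variety, known by McGerty--Nevins, to $\mc M_d^\chi(Q)$. Verifying that this fixed locus has the expected form and that the localization map respects the image of the Kirwan map is the crux of the proposed approach, and is where I would expect the genuine difficulty of the conjecture to concentrate.
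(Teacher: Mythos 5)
This statement is left as an open conjecture in the paper: there is no proof to compare against, only the surrounding heuristic that Kirwan surjectivity would bound the Kac polynomial coefficients by the Poincar\'e series of $H^\bullet_{\GL_d/\mathbb G_m}(\mathrm{pt})$. So the question is whether your sketch closes the gap, and it does not.

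The central mechanism you propose --- realizing $\mc M_d^\chi(Q)$ as the fixed locus of the $\mathbb G_m$-action scaling the framing arrows on $\mc M_{(d,1)}^{\chi'}(Q_{e_k})$ --- fails at the first step. A point of $\mu_{(d,1)}^{-1}(0)$ on which all arrows adjacent to $\infty$ vanish splits as a direct sum $V_\infty\oplus V_Q$ with $\dim V_\infty=e_\infty$ and $\dim V_Q=(d,0)$; both summands are then subrepresentations, and since $s((d,1))$ is a weighted average of $s(e_\infty)$ and $s((d,0))$, at least one of them has slope $\ge s((d,1))$, with equality excluded for generic $\chi'$. Hence for a generic stability parameter the semistable locus contains no representations with vanishing framing maps, and the fixed component you want to identify with $\mc M_d^\chi(Q)$ is empty. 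Even granting such a component, equivariant localization computes $H^\bullet_{\mathbb G_m}$ of the framed variety after inverting the equivariant parameter; it provides no map to $H^\bullet$ of a fixed component compatible with the two Kirwan maps, so there is no mechanism to ``transfer'' surjectivity. Your fallback --- adapting the McGerty--Nevins argument directly --- is precisely where the difficulty lives: their proof uses the framing in an essential way (the dimension-one vertex $\infty$ controls the quantization and the analysis of unstable $\mathcal D$-module supports), and the paper's Theorem \ref{t:kirwansurjfornakajima} is stated only for Crawley-Boevey quivers with nonzero framing for exactly this reason. The codimension estimates from Lemma \ref{l:formmaximum} and Corollary \ref{cor:codimofHNlarge}, which you correctly identify as available, give nonemptiness of the stable locus and an isomorphism in low degrees between $H^\bullet_{\GL_d/\mathbb G_m}(\mathrm{pt})$ and $H^\bullet(R_d^\chi(\bar Q))$, but say nothing about the restriction to the closed subvariety $\mc M_d^\chi(Q)\subset R_d^\chi(\bar Q)$, which is the actual content of the conjecture.
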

This conjecture would imply that the coefficients of the Kac polynomial for $d$ are bounded by (\ref{eq:convforsymmquiv}).

\begin{example}
    {\rm
    Let $\g$ be a simply-laced Kac-Moody Lie algebra and $d=\sum d_ie_i$ be an imaginary root such that $(d,e_i)<0$ for all simple roots $e_i$. If Conjecture \ref{conj:kirwanforzeroframing} is true, then $\dim \g_d$ is bounded above by the $1-\frac12(d,d)$ coefficient of 
    $$
    \prod_{i}\frac{1}{\prod_{k=1}^{d_i}(1-q^k)}.
    $$
    }
\end{example}

\begin{example}
    {\rm
    As a particular case of the previous example, consider the quiver from Example \ref{e:hyperbolicquiver} and the corresponding Kac-Moody Lie algebra $\g$. Let $d=(n,n,1)$. Then 
    $$
    1-\frac12(d,d)=n
    $$
    and the previous example gives us a bound
    $$
    \dim \g_d\le p^{2}(n).
    $$
    In contrast, Frenkel's conjecture \cite[(4.19)]{F85} asserts that 
$$
\dim \g_d\le p(n),
$$
and it is known to be an equality in this case \cite[1.13]{FF83}.
    }
\end{example}

\begin{remark}
    {\rm
        The appearance of $p^{|\supp\delta|}(q)$ in our formulas suggests that it is reasonable to study implications of our results for Frenkel's conjecture. We point out that there exist other approaches to studying root multiplicities of Kac-Moody Lie algebras using quiver varieties: see \cite{Tin21} and \cite{CT25}.
    }
\end{remark}    

\section{Stabilization for Nakajima quiver varieties}
Let $\delta$ be an imaginary root for $Q$ satisfying (\ref{eq:condition}) for the Cartan form of $Q$. We fix a dimension vector $d$ of $Q$ and a framing $w$. 
\begin{theorem}\label{t:nakajimastab}\
\begin{enumerate}
    \item [(i)] Assume that $\delta$ is an imaginary root for $Q$ satisfying weak {\rm (\ref{eq:condition})} for the Cartan form of $Q$ and $(d,e_i)<0$ for any $i\in Q_0$. Then the cohomologies
    $$
    H^{2k}(\mc M(d+n\delta,w),\C)
    $$
    stabilize for large $n$. Moreover, all these cohomologies are bounded above by the corresponding coefficients of
    $$
    \frac{p^{|\operatorname{supp} \delta|}(q)}{\prod_{i \notin \operatorname{supp} \delta} \prod_{k=1}^{d_i} (1 - q^{k})}.
    $$
    \item[(ii)] Assume that $\supp w\cap\supp \delta\ne \varnothing$ and that $\delta$ is an imaginary root for $Q$ satisfying {\rm (\ref{eq:condition})} for the Cartan form of $Q$. Then 
    $$
    \lim_{k\to\infty} H^{2k}(\mc M(d+n\delta,w),\C)q^k=\frac{p^{|\operatorname{supp} \delta|}(q)}{\prod_{i \notin \operatorname{supp} \delta} \prod_{k=1}^{d_i} (1 - q^{k})}.
    $$
\end{enumerate}
\end{theorem}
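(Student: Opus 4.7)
The plan is to apply the Crawley-Boevey trick and reduce both parts to the stabilization theorems established earlier in the paper. By the Crawley-Boevey isomorphism, $\mathcal{M}(d + n\delta, w) \cong \mathcal{M}^\chi_{(d + n\delta, 1)}(Q_w)$. The dimension vector $(d + n\delta, 1)$ is indivisible (last coordinate $1$) and imaginary for large $n$ (its Euler form with itself equals $\langle d + n\delta, d + n\delta \rangle_Q + 1 - \sum_j w_j(d_j + n\delta_j)$, which tends to $-\infty$ as $n \to \infty$). So Theorem \ref{t:kacviacohomo} identifies the Poincar\'e polynomial of $\mathcal{M}^\chi_{(d + n\delta, 1)}(Q_w)$ with the Kac polynomial $A_{(d + n\delta, 1)}(q)$ of $Q_w$, and both parts of the theorem become stabilization statements about these Kac polynomials along the increment $(\delta, 0)$, where $(\delta, 0)$ denotes $\delta$ extended by zero at the new vertex $\infty$.

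The key translation step is to check that condition (\ref{eq:condition}) for $\delta$ in $Q$ (Cartan form) lifts to the analogous condition for $(\delta, 0)$ in $Q_w$. A direct computation gives $(e_i, (\delta, 0))_{Q_w} = (e_i, \delta)_Q$ for $i \in Q_0$ (the $w_i$ arrows from $\infty$ to $i$ do not contribute since $(\delta, 0)_\infty = 0$) and $(e_\infty, (\delta, 0))_{Q_w} = -\sum_j w_j \delta_j$. Since $\supp(\delta, 0) = \supp \delta$ and $Q \subset Q_w$ cannot increase distances between vertices of $Q$, the connectivity part of (\ref{eq:condition}) transfers. Thus weak (\ref{eq:condition}) for $\delta$ in $Q$ implies weak (\ref{eq:condition}) for $(\delta, 0)$ in $Q_w$; strong (\ref{eq:condition}) together with $\supp w \cap \supp \delta \ne \varnothing$ upgrades this to strong (\ref{eq:condition}) for $(\delta, 0)$ in $Q_w$.

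For part (ii), I would apply Theorem \ref{t:kacstab} to $Q_w$ with increment $(\delta, 0)$; the stabilized formula simplifies after noting that $\{i \in (Q_w)_0 : i \notin \supp(\delta, 0)\} = (Q_0 \setminus \supp \delta) \sqcup \{\infty\}$ and that the factor at $\infty$ is $1 - q$ (since $(d, 1)_\infty = 1$), which cancels against the $(1 - q)$ numerator of the stabilized formula, producing exactly the target generating series. For part (i), I would instead apply Theorem \ref{t:kacstabwithrelaxedstar}; its upper-bound conclusion requires Kirwan surjectivity, which holds for $Q_w$ at the dimension vector $(d + n\delta, 1)$ by Theorem \ref{t:kirwansurjfornakajima}, and the same cancellation delivers the desired bound.

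The main obstacle I anticipate is verifying the hypothesis $((d + n\delta, 1), e_i)_{Q_w} < 0$ at all vertices of $Q_w$ required by Theorem \ref{t:kacstabwithrelaxedstar} in part (i). For $i \in Q_0$ this follows at once from $(d, e_i)_Q < 0$ (as $w_i \ge 0$ only makes the pairing smaller). For $i = \infty$ the condition reads $\sum_j w_j(d_j + n\delta_j) > 2$, which holds for large $n$ whenever $\supp w \cap \supp \delta \ne \varnothing$ or $\sum_j w_j d_j \ge 3$. In the residual case of a very small framing whose support is disjoint from $\supp \delta$, one has to redo the estimate of Lemma \ref{l:formmaxwithrelaxedstar} at the single vertex $\infty$ by hand; the contribution of that vertex is uniformly bounded in $n$ and should not interfere with the asymptotics.
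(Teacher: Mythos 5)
Your proposal follows essentially the same route as the paper: realize $\mc M(d+n\delta,w)$ as $\mc M_{(d+n\delta,1)}(Q_w)$ via the Crawley--Boevey trick, check that (weak) (\ref{eq:condition}) and indivisibility transfer to $Q_w$, and invoke Theorems \ref{t:kacstab} and \ref{t:kacstabwithrelaxedstar} together with Kirwan surjectivity (Theorem \ref{t:kirwansurjfornakajima}), with the $(1-q)$ factor cancelling against the contribution of the vertex $\infty$. You actually supply more detail than the paper's two-line proof, and your remark that the hypothesis $((d+n\delta,1),e_\infty)_{Q_w}<0$ for part (i) can fail when $\supp w\cap\supp\delta=\varnothing$ and $\sum_j w_jd_j\le 2$ flags a genuine point that the paper's own proof passes over silently.
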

\begin{proof}
    Using the Crawley-Boevey trick, we can realize the Najakima quiver variety $\mc M(d+n\delta,w)$ as $\mc M_{(d+n\delta,1)}(Q_w)$. The assumptions made ensure that the necessary assumptions of Theorems \ref{t:kacstab} and Theorem \ref{t:kacstabwithrelaxedstar} are satisfied in the corresponding cases (note that the vector $(d+n\delta,1)$ is always indivisible). Since Kirwan surjectivity is satisfied for Nakajima quiver varieties, the boundedness from (i) follows.
    %ensure that $\delta$ is an imaginary root for $Q_w$ satisfying (\ref{eq:condition}) for $\bar Q_w$, and the vector $(d+n\delta,1)$ is indivisible. The result then follows from Theorem \ref{t:kacstab}.
\end{proof}

\begin{example}
    {\rm
    Let $Q$ be a quiver on an affine ADE Dynkin diagram with $r$ nodes, and let $0$ denote a vertex whose deletion gives a connected Dynkin diagram, of the same type. Let $\delta$ be the primitive imaginary root, $d=0$ and $w=(0,\ldots,0,1)$, where $1$ is attached to the vertex $0$. It is easy to see that $\delta$ satisfies the assumptions of Theorem \ref{t:nakajimastab}(i), which gives 
    $$
    \lim_{n\to\infty}\dim H^{2k}(\mc M(n\delta,w),\C)\le p^r(k).
    $$
    
    According to \cite[Theorem 43]{Kuz07}, there exists a generic stability parameter $\chi$ for which
    $$
    \mc M^\chi(n\delta,w)\simeq \operatorname{Hilb}^n(\widehat{\C^2/G}),
    $$
    the Hilbert scheme of $n$ points on $\widehat{\C^2/G}$, the minimal resolution of $\C^2\git G$, where $G$ is the finite subgroup of $\operatorname{SL}_2\C$ corresponding to $Q$ via the MacKay correspondence. Combining \cite[Theorem 6.1]{Nak99} and \cite[Theorem 7.1]{Oda}, we get
    \begin{align}\label{eq:hilbertschemecohomo}
        \sum_{k,n} \dim H^{2k}(\operatorname{Hilb}^n(\widehat{\C^2/G}),\C)t^kq^n&=\prod_{m=1}^\infty\frac{1}{(1-t^{m-1}q^m)(1-t^mq^m)}\\\nonumber
        &=p^{r-2}(q)\prod_{m=1}^\infty\frac{1}{(1-t^{-1}(tq)^m)}.
    \end{align}
    
    Denote by $p_n(m)$ the number of partitions of $m$ in exactly $n$ parts, and by $p_n(q)$ the corresponding generating function. Then
    $$
    \prod_{m=1}^\infty\frac{1}{(1-t^{-1}(tq)^m)}=\sum_{m=0}^\infty t^{-m}p_m(tq)=\sum_{m=0}^\infty t^m \sum_{a=0}^\infty p_a(m+a)q^{m+a}.
    $$
    
    Thus, the $t^kq^{k+a}$ coefficient of \ref{eq:hilbertschemecohomo}  equals
    \begin{align*}
        \sum_{m+n=k}p_a(m+a)p^{r-2}(n)=\sum_{m+n=k}(p_0(m)+p_1(m)+\ldots+p_a(m))p^{r-2}(n),
    \end{align*}
    which is increasing with $a$ and for $a\ge k$ becomes
    $$
\sum_{m+n=k}p(m)p^{r-2}(n)=p^{r-1}(k),
    $$
    supporting Conjecture \ref{conj:exactlimit}.
}
\end{example}

\begin{example}
{\rm
    As a concrete case of the previous example, consider the 2-Kronecker quiver with $d=(0,0)$,  $\delta:=(1,1)$, and framing $(0,1)$:
    \[\begin{tikzcd}
	& {\bullet_1} \\
	{\bullet_n} & {\bullet_n}
	\arrow[shift left, from=2-1, to=2-2]
	\arrow[shift right, from=2-1, to=2-2]
	\arrow[dashed, from=2-2, to=1-2]\fullstopbelow
\end{tikzcd}\]

In this case, $G=\{\pm I\}\subset \operatorname{SL}_2\C$, which gives
\begin{align*}
    \C^2\git G&\simeq \operatorname{Spec}\frac{\C[x,y,z]}{x^2-yz},\\
    \widehat{\C^2/G}&\simeq T^*\mathbb P^1,
\end{align*}
which is the Springer resolution of the nilpotent cone $\mc N\simeq \C^2\git G$ for $\operatorname{GL}_2$. We have
$$
    \lim_{n\to\infty}\dim H^{2k}(\operatorname{Hilb}^n(T^*\mathbb P^1),\C)=p^2(k).
    $$
}
\end{example}

\section*{Acknowledgments}
The author thanks Samuel DeHority, Igor Frenkel, Lucien Hennecart and Ivan Losev for helpful discussions.

% \medskip
% \noindent \textbf{Funding:}

\section*{Declarations}
\noindent \textbf{Conflict of Interest:} The author has no conflicts of interest to declare.

\bibliographystyle{alphaurl}
\nocite{*}
\bibliography{references}

\end{document}